\newtheorem{theorem}{Theorem}
\newtheorem{corollary}[theorem]{Corollary}
\newtheorem{definition}[theorem]{Definition}
\newtheorem{example}[theorem]{Example}
\newtheorem{lemma}[theorem]{Lemma}
\newtheorem{proposition}[theorem]{Proposition}
\newtheorem{remark}[theorem]{Remark}
\newenvironment{proof}[1][Proof]{\noindent \textbf{#1.} }{\  \rule{0.5em}{0.5em}}
\begin{document}

\title{Coordinate Finite Type Rotational Surfaces in Euclidean Spaces}
\author{B. K. Bayram, K. Arslan, N. \"{O}nen \  \& B. Bulca,}
\date{}
\maketitle

\begin{abstract}
Submanifolds of coordinate finite-type were introduced in \cite{HV1}. A
submanifold of a Euclidean space is called a coordinate finite-type
submanifold if its coordinate functions are eigenfunctions of $\Delta $. In
the present study we consider coordinate finite-type surfaces in $\mathbb{E}%
^{4}$. We give necessary and sufficient conditions for generalized rotation
surfaces in $\mathbb{E}^{4}$ to become coordinate finite-type. We also give
some special examples.
\end{abstract}

\section{Introduction}

\footnote{%
2000 AMS \textit{Mathematics Subject Classification}. 53B25, 53C40, 53C42
\par
\textit{Key words and phrases}: Surfaces of restricted type, Rotational
surface, Finite type surfaces}Let M be a connected $n-$dimensional
submanifold of a Euclidean space $\mathbb{E}^{m}$ equipped with the induced
metric. Denote $\Delta $ by the Laplacian of $M$ \ acting on smooth
functions on $M$ . This Laplacian can be extended in a natural way to $%
\mathbb{E}^{m}$ valued smooth functions on $M.$ Whenever the position vector 
$x$ of $M$ in $\mathbb{E}^{m}$ can be decomposed as a finite sum of $\mathbb{%
E}^{m}$-valued non-constant functions of $\Delta $, one can say that $M$ is
of \textit{finite type}. More precisely the position vector $x$ of $M$ can
be expresed in the form $x=x_{0}+\sum_{i=1}^{k}x_{i}$, where $x_{0}$ is a
constant map $x_{1},x_{2},...,x_{k}$ non-constant maps such that $\Delta
x=\lambda _{i}x_{i},$ $\lambda _{i}\in \mathbb{R}$, $1\leq i\leq k.$ If $%
\lambda _{1},\lambda _{2},...,\lambda _{k}$ are different, then $M$ is said
to be of $k$-type. Similarly, a smooth map $\phi $ of an $n$-dimensional
Riemannian manifold $M$ of $\mathbb{E}^{m}$ is said to be of finite type if $%
\phi $ is a finite sum of $\mathbb{E}^{m}$-valued eigenfunctions of $\Delta $
(\cite{Ch1}, \cite{Ch2}). For the position vector field $\overrightarrow{H}$
of M it is well known (see eg. \cite{Ch2}) that $\Delta x=-n\overrightarrow{H%
}$, which shows in particular that $M$ is a minimal submanifold in $\mathbb{E%
}^{m}$ if and only if its coordinate functions are harmonic. In \cite{Ta}
Takahasi proved that an n-dimensional submanifold of $\mathbb{E}^{m}$ is of
1-type (i.e., $\Delta x=$ $\lambda x$) if and only if it is either a minimal
submanifold of $\mathbb{E}^{m}$ or a minimal submanifold of some hypersphere
of $\mathbb{E}^{m}.$ As a generalization of T. Takahashi's condition, O.
Garay considered in \cite{Go1}, submanifolds of Euclidean space whose
position vector field $x$ satisfies the differential equation $\Delta x=Ax$,
for some $m\times m$ diagonal matrix $A$. Garay called such submanifolds 
\textit{coordinate finite type submanifolds}. Actually coordinate finite
type submanifolds are finite type submanifolds whose type number s are at
most $m$. Each coordinate function of a coordinate finite type submanifold $%
m $ is of 1-type, since it is an eigenfunction of the Laplacian \cite{HV1}.

In \cite{GM} \ by G. Ganchev and V. Milousheva considered the surface M
generated by a W-curve $\gamma $ in $\mathbb{E}^{4}.$ They have shown that
these generated surfaces are a special type of rotation surfaces which are
introduced first by C. Moore in $1919$ (see \cite{Mo}). Vranceanu surfaces
in $\mathbb{E}^{4}$ are the special type of these surfaces \cite{Vr}.

This paper is organized as follows: Section $2$ gives some basic concepts of
the surfaces in $\mathbb{E}^{4}$. Section $3$ tells about the generalized
surfaces in $\mathbb{E}^{4}$. Further this section provides some basic
properties of surfaces in $\mathbb{E}^{4}$ and the structure of their
curvatures. In the final section we consider coordinate finite type surfaces
in euclidean spaces. We give necessary and sufficient conditions for
generalized rotation surfaces in $\mathbb{E}^{4}$ to become coordinate
finite type.

\section{Basic Concepts}

Let $M$ be a smooth surface in $\mathbb{E}^{n}$ given with the patch $X(u,v)$
: $(u,v)\in D\subset \mathbb{E}^{2}$. The tangent space to $M$ at an
arbitrary point $p=X(u,v)$ of $M$ span $\left \{ X_{u},X_{v}\right \} $. In
the chart $(u,v)$ the coefficients of the first fundamental form of $M$ are
given by 
\begin{equation}
E=<X_{u},X_{u}>,F=\left \langle X_{u},X_{v}\right \rangle ,G=\left \langle
X_{v},X_{v}\right \rangle ,  \label{A2*}
\end{equation}%
where $\left \langle ,\right \rangle $ is the Euclidean inner product. We
assume that $W^{2}=EG-F^{2}\neq 0,$ i.e. the surface patch $X(u,v)$ is
regular.\ For each $p\in M$, consider the decomposition $T_{p}\mathbb{E}%
^{n}=T_{p}M\oplus T_{p}^{\perp }M$ where $T_{p}^{\perp }M$ is the orthogonal
component of $T_{p}M$ in $\mathbb{E}^{n}$. Let $\overset{\sim }{\nabla }$ be
the Riemannian connection of $\mathbb{E}^{4}$. Given unit local vector
fields $X_{1},$ $X_{2}$ tangent to $M$.

Let $\chi (M)$ and $\chi ^{\perp }(M)$ be the space of the smooth vector
fields tangent to $M$ and the space of the smooth vector fields normal to $M$%
, respectively. Consider the second fundamental map: $h:\chi (M)\times \chi
(M)\rightarrow \chi ^{\perp }(M);$%
\begin{equation}
h(X_{i},X_{_{j}})=\widetilde{\nabla }_{X_{_{i}}}X_{_{j}}-\nabla
_{X_{_{i}}}X_{_{j}}\text{ \  \  \ }1\leq i,j\leq 2.  \label{A3}
\end{equation}%
where $\widetilde{\nabla }$ is the induced. This map is well-defined,
symmetric and bilinear.

For any arbitrary orthonormal normal frame field $\left \{
N_{1},N_{2},...,N_{n-2}\right \} $ of $M$, recall the shape operator $A:\chi
^{\perp }(M)\times \chi (M)\rightarrow \chi (M);$%
\begin{equation}
A_{N_{i}}X_{j}=-(\widetilde{\nabla }_{X_{j}}N_{k})^{T},\text{ \  \  \ }%
X_{j}\in \chi (M),\text{ }1\leq k\leq n-2  \label{A4}
\end{equation}%
This operator is bilinear, self-adjoint and satisfies the following equation:%
\begin{equation}
\left \langle A_{N_{k}}X_{j},X_{i}\right \rangle =\left \langle
h(X_{i},X_{j}),N_{k}\right \rangle =h_{ij}^{k}\text{, }1\leq i,j\leq 2.
\label{A5}
\end{equation}

The equation (\ref{A3}) is called Gaussian formula, and%
\begin{equation}
h(X_{i},X_{j})=\overset{n-2}{\underset{k=1}{\sum }}h_{ij}^{k}N_{k},\  \  \  \  \
1\leq i,j\leq 2  \label{A6}
\end{equation}%
where $c_{ij}^{k}$ are the coefficients of the second fundamental form.

Further, the Gaussian and mean curvature vector of a regular patch $X(u,v)$
are given by%
\begin{equation}
K=\sum_{k=1}^{n-2}(h_{11}^{k}h_{22}^{k}-(h_{12}^{k})^{2}),  \label{A7}
\end{equation}%
and 
\begin{equation}
H=\frac{1}{2}\sum_{k=1}^{n-2}(h_{11}^{k}+h_{22}^{k})N_{k},  \label{A8}
\end{equation}%
respectively, where $h$ is the second fundamental form of $M$. Recall that a
surface $M$ is said to be \textit{minimal} if its mean curvature vector
vanishes identically \cite{Ch1}. For any real function $f$ on $M$ the
Laplacian of $f$ is defined by

\begin{equation}
\Delta f=-\sum_{i}(\widetilde{\nabla }_{e_{i}}\widetilde{\nabla }_{e_{i}}f-%
\widetilde{\nabla }_{\nabla _{e_{i}}e_{i}}f).  \label{A9}
\end{equation}%
\ 

\section{Generalized Rotation Surfaces in $\mathbb{E}^{4}$}

Let $\gamma =\gamma (s):I\rightarrow \mathbb{E}^{4}$ be a W-curve in
Euclidean $4$-space $\mathbb{E}^{4}$ parametrized as follows:%
\begin{equation*}
\gamma (v)=(a\cos cv,a\sin cv,b\cos dv,b\sin dv),\text{ }0\leq v\leq 2\pi ,
\end{equation*}%
where $a,b,c,d$ are constants $(c>0,d>0).$ In \cite{GM} G. Ganchev and V.
Milousheva considered the surface $M$ generated by the curve $\gamma $ with
the following surface patch: 
\begin{equation}
X(u,v)=(f(u)\cos cv,f(u)\sin cv,g(u)\cos dv,g(u)\sin dv),  \label{C2}
\end{equation}%
where $u\in J,0\leq v\leq 2\pi ,$ $f(u)$ and $g(u)$ are arbitrary smooth
functions satisfying 
\begin{equation*}
c^{2}f\text{ }^{2}+d^{2}g^{2}>0\text{ and }(f^{\text{ }\prime \text{ }%
})^{2}+(g^{\text{ }\prime \text{ }})^{2}>0.
\end{equation*}

These surfaces are first introduced by C. Moore in \cite{Mo} , called 
\textit{general rotation surfaces.}

We choose an orthonormal frame $\left \{ e_{1},e_{2},e_{3},e_{4}\right \} $
such that $e_{1},e_{2}$ are tangent to $M$ and $e_{3},e_{4}$ normal to $M$
in the following (see, \cite{GM}): \qquad 
\begin{eqnarray}
e_{1} &=&\frac{X_{u}}{\left \Vert X_{u}\right \Vert },\text{ }e_{2}=\frac{%
X_{v}}{\left \Vert X_{u}\right \Vert }\   \notag \\
e_{3} &=&\frac{1}{\sqrt{(f^{\prime })^{2}+(g^{\prime })^{2}}}(g^{\prime
}\cos cv,g^{\prime }\sin cv,-f^{\prime }\cos dv,-f^{\prime }\sin dv),
\label{C2*} \\
e_{4} &=&\frac{1}{\sqrt{c^{2}f^{2}+d^{2}g^{2}}}(-dg\sin cv,dg\cos cv,cf\sin
dv,-cf\cos dv).  \notag
\end{eqnarray}

Hence the coefficients of the first fundamental form of the surface are%
\begin{eqnarray}
E &=&\left \langle X_{u},X_{u}\right \rangle =(f^{\prime })^{2}+(g^{\prime
})^{2}  \notag \\
F &=&\left \langle X_{u},X_{v}\right \rangle =0  \label{C3} \\
G &=&\left \langle X_{v},X_{v}\right \rangle =c^{2}f^{2}+d^{2}g^{2}  \notag
\end{eqnarray}%
where $\left \langle ,\right \rangle $ is the standard scalar product in $%
\mathbb{E}^{4}.$ Since 
\begin{equation*}
EG-F^{2}=\left( (f^{\prime })^{2}+(g^{\prime })^{2}\right) \left(
c^{2}f^{2}+d^{2}g^{2}\right)
\end{equation*}%
does not vanish, the surface patch $X(u,v)$ is regular. Then with respect to
the frame field $\left \{ e_{1},e_{2},e_{3},e_{4}\right \} ,$ the Gaussian
and Weingarten formulas (\ref{A3})-(\ref{A4}) of $M$ look like (see, \cite%
{DT}); 
\begin{eqnarray}
\tilde{\nabla}_{e_{1}}e_{1} &=&-A(u)e_{2}+h_{11}^{1}e_{3},  \notag \\
\tilde{\nabla}_{e_{1}}e_{2} &=&A(u)e_{1}+h_{12}^{2}e_{4},  \label{C4} \\
\tilde{\nabla}_{e_{2}}e_{2} &=&h_{22}^{1}e_{3},  \notag \\
\tilde{\nabla}_{e_{2}}e_{1} &=&h_{12}^{2}e_{4},  \notag
\end{eqnarray}%
and%
\begin{eqnarray}
\tilde{\nabla}_{e_{1}}e_{3} &=&-h_{11}^{1}e_{1}+B(u)e_{4},  \notag \\
\tilde{\nabla}_{e_{1}}e_{4} &=&-h_{12}^{2}e_{2}-B(u)e_{3},  \label{C5} \\
\tilde{\nabla}_{e_{2}}e_{3} &=&-h_{22}^{1}e_{2},  \notag \\
\tilde{\nabla}_{e_{2}}e_{4} &=&-h_{12}^{2}e_{1},  \notag
\end{eqnarray}%
where\qquad 
\begin{eqnarray}
A(u) &=&\frac{c^{2}ff^{\prime }+d^{2}gg^{\prime }}{\sqrt{(f^{\prime
})^{2}+(g^{\prime })^{2}}(c^{2}f^{2}+d^{2}g^{2})},  \notag \\
B(u) &=&\frac{cd(ff^{\prime }+gg^{\prime })}{\sqrt{(f^{\prime
})^{2}+(g^{\prime })^{2}}(c^{2}f^{2}+d^{2}g^{2})},  \notag \\
h_{11}^{1} &=&\frac{d^{2}f^{\prime }g-c^{2}fg^{\prime }}{\sqrt{(f^{\prime
})^{2}+(g^{\prime })^{2}(}c^{2}f^{2}+d^{2}g^{2})},  \notag \\
h_{22}^{1} &=&\frac{g^{\prime }f^{\prime \prime }-f^{\prime }g^{\prime
\prime }}{\left( (f^{\prime })^{2}+(g^{\prime })^{2}\right) ^{\frac{3}{2}}},
\label{C6} \\
h_{12}^{2} &=&\frac{cd(f^{\prime }g-fg^{\prime })}{\sqrt{(f^{\prime
})^{2}+(g^{\prime })^{2}(}c^{2}f^{2}+d^{2}g^{2})},  \notag \\
h_{11}^{2} &=&h_{22}^{2}=h_{12}^{1}=0.  \notag
\end{eqnarray}%
are the differentiable functions. Using (\ref{A7})-(\ref{A8}) with (\ref{C6}%
) one can get the following results;

\begin{proposition}
\cite{ABBO} Let $M$ be a generalized rotation surface given by the
parametrization (\ref{C2}), then the Gaussian curvature of $M$ is%
\begin{equation*}
K=\frac{(c^{2}f{}^{\text{ }2}+d^{2}g{}^{2})(g^{\text{ }\prime }f{}^{\text{ }%
\prime \prime }\text{-}f^{\text{ }\prime }g^{\text{ }\prime \prime
})(d^{2}g^{\text{ }}f{}^{\text{ }\prime }\text{-}c^{2}fg^{\text{ }\prime })%
\text{-}c^{2}d^{2}(g^{\text{ }}f{}^{\text{ }\prime }\text{-}fg^{\text{ }%
\prime })^{2}((f{}^{\text{ }\prime })^{2}+(g{}^{\prime })^{2})}{((f{}^{\text{
}\prime })^{2}+(g{}^{\prime })^{2})^{2}(c^{2}f{}^{\text{ }%
2}+d^{2}g{}^{2})^{2}}.
\end{equation*}
\end{proposition}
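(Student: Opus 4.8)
The plan is to specialize the general Gauss-curvature formula (\ref{A7}) to the codimension-two situation and then substitute the coefficients of the second fundamental form listed in (\ref{C6}). Since here $n=4$, formula (\ref{A7}) reads
\[
K=\left(h_{11}^{1}h_{22}^{1}-(h_{12}^{1})^{2}\right)+\left(h_{11}^{2}h_{22}^{2}-(h_{12}^{2})^{2}\right).
\]
The crucial simplification is supplied by the vanishing relations $h_{11}^{2}=h_{22}^{2}=h_{12}^{1}=0$ recorded at the end of (\ref{C6}): they annihilate the $(h_{12}^{1})^{2}$ term and the entire first factor of the second summand, so that the formula collapses to $K=h_{11}^{1}h_{22}^{1}-(h_{12}^{2})^{2}$.

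The second step is to insert the three surviving coefficients. From (\ref{C6}) the product $h_{11}^{1}h_{22}^{1}$ carries the denominator $\left((f^{\prime})^{2}+(g^{\prime})^{2}\right)^{2}(c^{2}f^{2}+d^{2}g^{2})$ (the two half-integer powers of $(f^{\prime})^{2}+(g^{\prime})^{2}$, namely $\tfrac12$ and $\tfrac32$, combining to a square), while $(h_{12}^{2})^{2}$ carries the denominator $\left((f^{\prime})^{2}+(g^{\prime})^{2}\right)(c^{2}f^{2}+d^{2}g^{2})^{2}$.

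The only real work is then the algebraic bookkeeping of bringing these two fractions over the common denominator $\left((f^{\prime})^{2}+(g^{\prime})^{2}\right)^{2}(c^{2}f^{2}+d^{2}g^{2})^{2}$: one multiplies $h_{11}^{1}h_{22}^{1}$ by $(c^{2}f^{2}+d^{2}g^{2})$ and $(h_{12}^{2})^{2}$ by $\left((f^{\prime})^{2}+(g^{\prime})^{2}\right)$ in numerator and denominator, and reads off the combined numerator as
\[
(c^{2}f^{2}+d^{2}g^{2})(g^{\prime}f^{\prime\prime}-f^{\prime}g^{\prime\prime})(d^{2}gf^{\prime}-c^{2}fg^{\prime})-c^{2}d^{2}(gf^{\prime}-fg^{\prime})^{2}\left((f^{\prime})^{2}+(g^{\prime})^{2}\right),
\]
which, over the degree-four product denominator just displayed, is precisely the asserted expression. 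There is no conceptual obstacle here, since the proposition is a direct computation; the one thing to watch is keeping track of the fractional exponents of $(f^{\prime})^{2}+(g^{\prime})^{2}$ so that the common denominator comes out exactly as the fourth-degree product appearing in the statement.
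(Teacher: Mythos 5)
Your proposal is correct and follows exactly the route the paper indicates: the text preceding the proposition says the result is obtained "using (\ref{A7})--(\ref{A8}) with (\ref{C6})," and your reduction of (\ref{A7}) to $K=h_{11}^{1}h_{22}^{1}-(h_{12}^{2})^{2}$ via the vanishing coefficients, followed by substitution and clearing of the fractional powers of $(f^{\prime})^{2}+(g^{\prime})^{2}$, reproduces the stated formula with the correct common denominator. Nothing is missing.
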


An easy consequence of Proposition $1$ is the following.

\begin{corollary}
\cite{ABBO} The generalized rotation surface given by the parametrization (%
\ref{C2}) has vanishing Gaussian curvature if and only if the following
equation%
\begin{equation*}
(c^{2}f{}^{\text{ }2}+d^{2}g{}^{2})(g^{\text{ }\prime }f{}^{\text{ }\prime
\prime }\text{-}f^{\text{ }\prime }g^{\text{ }\prime \prime })(d^{2}g^{\text{
}}f{}^{\text{ }\prime }\text{-}c^{2}fg^{\text{ }\prime })\text{-}%
c^{2}d^{2}(g^{\text{ }}f{}^{\text{ }\prime }\text{-}fg^{\text{ }\prime
})^{2}((f{}^{\text{ }\prime })^{2}+(g{}^{\prime })^{2})=0,
\end{equation*}%
holds.
\end{corollary}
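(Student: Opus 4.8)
The plan is to read the statement off directly from the closed-form expression for $K$ established in Proposition 1. That proposition writes the Gaussian curvature as a single rational expression whose denominator is
\[
\left( (f')^2 + (g')^2 \right)^2 \left( c^2 f^2 + d^2 g^2 \right)^2,
\]
so the vanishing of $K$ is equivalent to the vanishing of its numerator, \emph{provided} the denominator does not vanish. Hence the entire argument reduces to verifying that this denominator is strictly positive at every point of $M$.

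For this I would invoke the regularity hypotheses imposed on the surface patch (\ref{C2}) in Section 3, namely $c^2 f^2 + d^2 g^2 > 0$ and $(f')^2 + (g')^2 > 0$. These are exactly the conditions that guarantee $EG - F^2 \neq 0$, so that the patch $X(u,v)$ is regular and both factors $(f')^2 + (g')^2$ and $c^2 f^2 + d^2 g^2$ are strictly positive. Consequently their squares, and the product of those squares, are strictly positive as well, so the denominator never vanishes on $M$.

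With the denominator bounded away from zero, $K = 0$ holds at a point if and only if the numerator
\[
(c^2 f^2 + d^2 g^2)(g' f'' - f' g'')(d^2 g f' - c^2 f g') - c^2 d^2 (g f' - f g')^2 ((f')^2 + (g')^2)
\]
vanishes there, which is precisely the displayed equation in the statement. Since there is no genuine computation beyond this quotient-is-zero observation, I do not expect any real obstacle; the only point requiring care is the explicit appeal to the regularity conditions in order to rule out a vanishing denominator before cancelling it.
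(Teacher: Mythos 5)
Your proof is correct and follows exactly the route the paper intends: the corollary is stated as an immediate consequence of Proposition 1, with the vanishing of $K$ reduced to the vanishing of the numerator once the regularity conditions $(f')^{2}+(g')^{2}>0$ and $c^{2}f^{2}+d^{2}g^{2}>0$ guarantee the denominator is nonzero. Your explicit appeal to those conditions is the only content needed, and it matches the paper's (unwritten) argument.
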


The following results are well-known;

\begin{proposition}
\cite{ABBO} Let $M$ be a generalized rotation surface given by the
parametrization (\ref{C2}), then the mean curvature vector of $M$ is%
\begin{eqnarray*}
\overrightarrow{H} &=&\frac{1}{2}(h_{11}^{1}+h_{22}^{1})e_{3} \\
&=&\left( \frac{(c^{2}f{}^{\text{ }2}+d^{2}g{}^{2})(g^{\text{ }\prime }f{}^{%
\text{ }\prime \prime }-f^{\text{ }\prime }g^{\text{ }\prime \prime
})+(d^{2}g^{\text{ }}f{}^{\text{ }\prime }-c^{2}fg^{\text{ }\prime })((f{}^{%
\text{ }\prime })^{2}+(g{}^{\prime })^{2})}{2((f{}^{\text{ }\prime
})^{2}+(g{}^{\prime })^{2})^{3/2}(c^{2}f{}^{\text{ }2}+d^{2}g{}^{2})}\right)
e_{3}.
\end{eqnarray*}
\end{proposition}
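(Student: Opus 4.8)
The plan is to apply the general mean curvature formula (\ref{A8}) directly, specialized to the case $n=4$. Since $M$ is a surface in $\mathbb{E}^{4}$, there are exactly $n-2=2$ normal directions, so (\ref{A8}) reads
\begin{equation*}
\overrightarrow{H}=\frac{1}{2}(h_{11}^{1}+h_{22}^{1})e_{3}+\frac{1}{2}(h_{11}^{2}+h_{22}^{2})e_{4}.
\end{equation*}
First I would invoke the last line of (\ref{C6}), namely $h_{11}^{2}=h_{22}^{2}=0$, which immediately kills the $e_{4}$-component and yields the first claimed equality $\overrightarrow{H}=\frac{1}{2}(h_{11}^{1}+h_{22}^{1})e_{3}$.

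The remaining work is purely algebraic: substitute the explicit expressions for $h_{11}^{1}$ and $h_{22}^{1}$ from (\ref{C6}) and combine them over a common denominator. The two fractions carry different denominators, namely $\sqrt{(f^{\prime})^{2}+(g^{\prime})^{2}}\,(c^{2}f^{2}+d^{2}g^{2})$ for $h_{11}^{1}$ and $((f^{\prime})^{2}+(g^{\prime})^{2})^{3/2}$ for $h_{22}^{1}$. The key bookkeeping observation is that $((f^{\prime})^{2}+(g^{\prime})^{2})^{3/2}=((f^{\prime})^{2}+(g^{\prime})^{2})\sqrt{(f^{\prime})^{2}+(g^{\prime})^{2}}$, so the least common denominator is $((f^{\prime})^{2}+(g^{\prime})^{2})^{3/2}(c^{2}f^{2}+d^{2}g^{2})$. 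I would then multiply the numerator of $h_{11}^{1}$ by $((f^{\prime})^{2}+(g^{\prime})^{2})$ and that of $h_{22}^{1}$ by $(c^{2}f^{2}+d^{2}g^{2})$, add the resulting numerators, and carry the overall factor $\frac{1}{2}$ through to arrive at the displayed expression.

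Since the computation is a direct substitution into a known formula, there is no genuine obstacle here; the only point demanding care is the common-denominator manipulation and keeping the sign conventions of (\ref{C6}) straight (noting in particular that $d^{2}f^{\prime}g=d^{2}gf^{\prime}$, which is how the term appears in the final numerator). I would close by observing that the surviving $e_{3}$-component is exactly $\frac{1}{2}(h_{11}^{1}+h_{22}^{1})$, which confirms both equalities of the statement simultaneously.
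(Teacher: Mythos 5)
Your proposal is correct and follows exactly the route the paper intends: it states before Proposition 3 that the result follows by ``using (\ref{A7})--(\ref{A8}) with (\ref{C6})'', which is precisely your direct substitution of $h_{11}^{1}$, $h_{22}^{1}$ and the vanishing of $h_{11}^{2}=h_{22}^{2}$ into the mean curvature formula, followed by the common-denominator combination. The algebra checks out, so nothing is missing.
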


An easy consequence of Proposition $3$ is the following.

\begin{corollary}
\cite{ABBO} The generalized rotation surface given by the parametrization (%
\ref{C2}) is minimal surface in $\mathbb{E}^{4}$ if and only if the equation 
$\ $%
\begin{equation*}
(c^{2}f{}^{\text{ }2}+d^{2}g{}^{2})(g^{\text{ }\prime }f{}^{\text{ }\prime
\prime }-f^{\text{ }\prime }g^{\text{ }\prime \prime })+(d^{2}g^{\text{ }%
}f{}^{\text{ }\prime }-c^{2}fg^{\text{ }\prime })((f{}^{\text{ }\prime
})^{2}+(g{}^{\prime })^{2})=0,
\end{equation*}%
holds.
\end{corollary}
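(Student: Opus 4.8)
The plan is to read this off directly from Proposition 3, since by definition minimality means that the mean curvature vector $\overrightarrow{H}$ vanishes identically. First I would recall the formula established there, namely that
\[
\overrightarrow{H}=\left(\frac{N}{D}\right)e_{3},
\]
where $N=(c^{2}f^{2}+d^{2}g^{2})(g^{\prime }f^{\prime \prime }-f^{\prime }g^{\prime \prime })+(d^{2}gf^{\prime }-c^{2}fg^{\prime })((f^{\prime })^{2}+(g^{\prime })^{2})$ is exactly the expression appearing in the statement, and $D=2((f^{\prime })^{2}+(g^{\prime })^{2})^{3/2}(c^{2}f^{2}+d^{2}g^{2})$ is the denominator of that fraction.

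Next I would observe that $e_{3}$ is a unit vector of the orthonormal frame $\{e_{1},e_{2},e_{3},e_{4}\}$, so $\overrightarrow{H}$ vanishes identically if and only if its scalar coefficient $N/D$ vanishes identically. To pass from $N/D=0$ to a statement about $N$ alone, I would invoke the regularity hypotheses imposed on the parametrization (\ref{C2}): the standing assumptions $(f^{\prime })^{2}+(g^{\prime })^{2}>0$ and $c^{2}f^{2}+d^{2}g^{2}>0$ guarantee that $D$ is strictly positive, hence nowhere zero. Therefore $N/D=0$ is equivalent to $N=0$, which is precisely the displayed equation in the corollary.

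The only step requiring any care — and it is the sole, rather mild, obstacle — is checking that the denominator $D$ cannot vanish; this is exactly what the regularity conditions on $f$ and $g$ supply. With that in hand the equivalence is immediate, so the argument reduces to dividing the conclusion of Proposition 3 by its nowhere-vanishing denominator and reading off the numerator.
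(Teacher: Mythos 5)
Your argument is correct and is exactly the route the paper intends: the corollary is stated there as an immediate consequence of Proposition 3, obtained by noting that the denominator of the coefficient of $e_{3}$ is nonzero under the standing regularity assumptions $(f^{\prime })^{2}+(g^{\prime })^{2}>0$ and $c^{2}f^{2}+d^{2}g^{2}>0$, so that $\overrightarrow{H}=0$ if and only if the numerator vanishes. Nothing is missing.
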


\begin{definition}
The generalized rotation surface given by the parametrization%
\begin{equation}
f(u)=r(u)\cos u,\text{ }g\text{ }(u)=r(u)\sin u,\text{ }c=1,d=1.  \label{C18}
\end{equation}%
is called \textit{Vranceanu rotation surface} in Euclidean 4-space $\mathbb{E%
}^{4}$ \cite{Vr}$.$
\end{definition}

\begin{remark}
Substituting (\ref{C18}) into the equation given in Corollary $2$ we obtain
the condition for Vranceanu rotation surface which has vanishing Gaussian
curvature;%
\begin{equation}
r(u)r^{\prime \prime }(u)-(r^{\prime }(u))^{2}=0.  \label{C19}
\end{equation}%
Further, and easy calculation shows that $r(u)=\lambda e^{\mu u},(\lambda
,\mu \in R)$ \ is the solution is this second degree equation. So, we get
the following result.
\end{remark}

\begin{corollary}
\cite{Yo} Let $M$ is a Vranceanu rotation surface in Euclidean 4-space. If $%
M $ has vanishing Gaussian curvature, then $r(u)=\lambda e^{\mu u}$, where $%
\lambda $ and $\mu $ are real constants. For the case, $\lambda =1,$ $\mu
=0,r(u)=1,$ the surface $M$ is a Clifford torus, that is it is the product
of two plane circles with same radius.
\end{corollary}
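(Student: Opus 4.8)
The plan is to reduce the vanishing Gauss curvature condition of Corollary 2 to the ordinary differential equation (\ref{C19}), integrate it, and then analyze the degenerate solution. The preceding Remark already records the outcome of the first step, but to make the argument self-contained I would substitute the Vranceanu data $f=r\cos u$, $g=r\sin u$ and $c=d=1$ directly into the equation of Corollary 2. The key is that each building block collapses to an expression in $r$ alone: one finds $f^{2}+g^{2}=r^{2}$, $(f^{\prime })^{2}+(g^{\prime })^{2}=r^{2}+(r^{\prime })^{2}$, the Jacobian term $gf^{\prime }-fg^{\prime }=-r^{2}$, and, after expanding $f^{\prime \prime }$ and $g^{\prime \prime }$, $g^{\prime }f^{\prime \prime }-f^{\prime }g^{\prime \prime }=rr^{\prime \prime }-2(r^{\prime })^{2}-r^{2}$. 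Substituting these, every summand carries a factor $r^{4}$; since regularity forces $r^{2}=c^{2}f^{2}+d^{2}g^{2}>0$ I may divide it out, and the surviving bracket simplifies to $rr^{\prime \prime }-(r^{\prime })^{2}=0$, which is precisely (\ref{C19}).

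To integrate this I would observe that $rr^{\prime \prime }-(r^{\prime })^{2}=r^{2}(\ln r)^{\prime \prime }$ wherever $r\neq 0$, so (\ref{C19}) is equivalent to $(\ln r)^{\prime \prime }=0$. Hence $\ln r$ is affine in $u$, and therefore $r(u)=\lambda e^{\mu u}$ for real constants $\lambda ,\mu$, which establishes the first assertion.

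For the special case $\lambda =1,\mu =0$ we have $r\equiv 1$, so $f=\cos u$, $g=\sin u$ and the patch becomes $X(u,v)=(x_{1},x_{2},x_{3},x_{4})=(\cos u\cos v,\cos u\sin v,\sin u\cos v,\sin u\sin v)$. A direct computation gives $\left\Vert X\right\Vert =1$, so $M$ lies in the unit hypersphere $S^{3}\subset \mathbb{E}^{4}$. To exhibit $M$ as a product of two circles of equal radius I would pass to the angles $\alpha =u+v$, $\beta =u-v$ and read off the orthogonal combinations $(x_{1}-x_{4},x_{2}+x_{3})=(\cos \alpha ,\sin \alpha )$ and $(x_{1}+x_{4},x_{3}-x_{2})=(\cos \beta ,\sin \beta )$. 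The linear map sending $(x_{1},x_{2},x_{3},x_{4})$ to $\tfrac{1}{\sqrt{2}}(x_{1}-x_{4},x_{2}+x_{3},x_{1}+x_{4},x_{3}-x_{2})$ is orthogonal, and it carries $M$ onto $\tfrac{1}{\sqrt{2}}(\cos \alpha ,\sin \alpha ,\cos \beta ,\sin \beta )$, i.e. onto $S^{1}(\tfrac{1}{\sqrt{2}})\times S^{1}(\tfrac{1}{\sqrt{2}})$, the Clifford torus.

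The computations of the first paragraph are the bulk of the labor but are purely mechanical; the one place needing care is the expansion of the second derivatives and verifying the cancellations that leave $g^{\prime }f^{\prime \prime }-f^{\prime }g^{\prime \prime }=rr^{\prime \prime }-2(r^{\prime })^{2}-r^{2}$. The only conceptual point, and hence the main obstacle, is the last step: recognizing the flat $r\equiv 1$ surface in $S^{3}$ as a Riemannian product of two equal circles, for which passing to the $\alpha ,\beta$ coordinates is the decisive device.
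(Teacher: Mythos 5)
Your proposal is correct and follows essentially the same route as the paper: the paper's Remark preceding the corollary substitutes the Vranceanu data into Corollary 2 to obtain $rr''-(r')^2=0$ and then reads off the exponential solution, exactly as you do. Your write-up is in fact more complete than the paper's, since you show via $(\ln r)''=0$ that \emph{all} solutions are of the form $\lambda e^{\mu u}$ (the paper only checks that exponentials satisfy the ODE) and you explicitly verify the Clifford torus identification with an orthogonal change of coordinates, which the paper merely asserts with a citation.
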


\begin{corollary}
\cite{ABBO} Let $M$ is a Vranceanu rotation surface in Euclidean 4-space. If 
$M$ is minimal then%
\begin{equation*}
r(u)r^{\prime \prime }(u)-3(r^{\prime }(u))^{2}-2r(u)^{2}=0.
\end{equation*}%
holds.
\end{corollary}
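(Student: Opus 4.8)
The plan is to derive this corollary as a direct specialization of the minimality criterion for generalized rotation surfaces (Corollary 4). Putting $c=d=1$ in that criterion, a Vranceanu surface is minimal exactly when
\[
(f^{2}+g^{2})(g'f''-f'g'')+(gf'-fg')\bigl((f')^{2}+(g')^{2}\bigr)=0,
\]
so the whole task reduces to inserting the parametrization $f=r\cos u$, $g=r\sin u$ from Definition 5 and simplifying.

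First I would record the derivatives
\[
f'=r'\cos u-r\sin u,\qquad g'=r'\sin u+r\cos u,
\]
and then
\[
f''=(r''-r)\cos u-2r'\sin u,\qquad g''=(r''-r)\sin u+2r'\cos u,
\]
taking care with the product-rule terms that generate the factors $2r'$.

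Next I would exploit the combinations that collapse through $\cos^{2}u+\sin^{2}u=1$. A short computation gives $f^{2}+g^{2}=r^{2}$, $(f')^{2}+(g')^{2}=(r')^{2}+r^{2}$, and $gf'-fg'=-r^{2}$. These three identities turn the second summand into $-r^{2}\bigl((r')^{2}+r^{2}\bigr)$ with no trigonometry left.

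The only genuinely computational step is the Jacobian-type term $g'f''-f'g''$. Here I would expand the two products and track the coefficients of $r'r''$, $(r')^{2}$, $rr'$, $rr''$, and $r^{2}$ separately; the $r'r''$ and $rr'$ contributions cancel between the two products, while the remaining terms recombine through $\cos^{2}u+\sin^{2}u=1$ to yield $g'f''-f'g''=rr''-2(r')^{2}-r^{2}$. Multiplying this by $f^{2}+g^{2}=r^{2}$ and adding the second summand produces $r^{3}r''-3r^{2}(r')^{2}-2r^{4}=0$, and dividing by $r^{2}\neq 0$ gives the stated equation. The only obstacle is keeping the numerous trigonometric cross-terms organized, but they are arranged so as to cancel, so the difficulty is purely one of bookkeeping.
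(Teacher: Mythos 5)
Your proposal is correct, and all the intermediate identities check out ($f^2+g^2=r^2$, $(f')^2+(g')^2=(r')^2+r^2$, $gf'-fg'=-r^2$, $g'f''-f'g''=rr''-2(r')^2-r^2$), leading exactly to $rr''-3(r')^2-2r^2=0$ after dividing by $r^2$. The paper gives no explicit proof (the result is cited from an earlier paper), but your route --- specializing Corollary 4 to $c=d=1$ and substituting the Vranceanu parametrization of Definition 5 --- is precisely the substitution procedure the paper itself uses in Remark 6 for the flatness condition, so this is essentially the intended argument.
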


\begin{corollary}
\cite{ABBO} Let $M$ is a Vranceanu rotation surface in Euclidean 4-space. If 
$M$ is minimal then 
\begin{equation}
r(u)=\frac{\pm 1}{\sqrt{a\sin 2u-b\cos 2u}},  \label{C20}
\end{equation}%
where, $a$ and $b$ are real constants.
\end{corollary}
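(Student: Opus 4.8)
The plan is to integrate the minimality ODE produced in Corollary 6, namely
\[
r(u)r^{\prime \prime }(u)-3(r^{\prime }(u))^{2}-2r(u)^{2}=0,
\]
and to display its general solution in the asserted form. This is a second-order nonlinear equation, but the special structure $rr^{\prime \prime }=3(r^{\prime })^{2}+2r^{2}$ strongly suggests that it can be linearized by a suitable power substitution, and the entire argument will reduce to solving a constant-coefficient linear ODE.

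First I would search for a new dependent variable $w=r^{p}$ with $p$ chosen to annihilate the $(r^{\prime })^{2}$ nonlinearity. Differentiating twice and using the minimality relation in the form $r^{\prime \prime }=3r^{-1}(r^{\prime })^{2}+2r$ gives
\[
w^{\prime \prime }=p(p+2)\,r^{p-2}(r^{\prime })^{2}+2p\,r^{p}.
\]
The coefficient $p(p+2)$ of the nonlinear term vanishes exactly when $p=-2$, so the correct change of variable is $w=1/r^{2}$.

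With $w=r^{-2}$ the same computation collapses to $w^{\prime \prime }=-4r^{-2}=-4w$, so that (for $r\neq 0$) the minimality condition is equivalent to the linear equation
\[
w^{\prime \prime }(u)+4w(u)=0.
\]
Its general solution is $w(u)=a\sin 2u-b\cos 2u$ for real constants $a,b$, and inverting $w=1/r^{2}$ yields precisely
\[
r(u)=\frac{\pm 1}{\sqrt{a\sin 2u-b\cos 2u}},
\]
which is the claimed formula.

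The only genuinely non-routine step is recognizing the linearizing substitution $w=1/r^{2}$; once it is identified, the remaining work is the elementary integration of a harmonic-oscillator equation. Two minor points should be noted for completeness: the substitution presupposes that $r$ is nowhere zero, which is guaranteed on the domain where the Vranceanu patch is regular, and one must restrict to an interval on which $a\sin 2u-b\cos 2u>0$ so that the square root, and hence $r$, is real.
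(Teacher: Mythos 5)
Your proof is correct. The paper itself gives no argument for this corollary (it is quoted from the reference \cite{ABBO}), so the only thing to check is that your derivation soundly integrates the minimality ODE $r r''-3(r')^2-2r^2=0$ of the preceding corollary: the substitution $w=r^{-2}$ does give $w''=p(p+2)r^{p-2}(r')^2+2pr^p\big|_{p=-2}=-4w$, the harmonic-oscillator equation $w''+4w=0$ has general solution $a\sin 2u-b\cos 2u$ up to relabelling of constants, and inverting recovers exactly the stated $r(u)$; one also verifies directly that this $r$ satisfies the original ODE. Your remarks on $r\neq 0$ (forced by the regularity condition $c^2f^2+d^2g^2>0$, which here reads $r^2>0$) and on restricting to an interval where $a\sin 2u-b\cos 2u>0$ are exactly the caveats the paper leaves implicit; the only slip is a harmless one of labelling, since the ODE you integrate comes from the corollary numbered 8 in this paper's counter, not 6.
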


\begin{definition}
The surface patch $X(u,v)$ is called pseudo-umbilical if the shape operator
with respect to $H$ is proportional to the identity (see, \cite{Ch1}). An
equivalent condition is the following:%
\begin{equation}
<h(X_{i},X_{j}),H>=\lambda ^{2}<X_{i},X_{j}>,  \label{C25}
\end{equation}%
where, $\lambda =\left \Vert H\right \Vert .$ It is easy to see that each
minimal surface is pseudo-umbilical.
\end{definition}

The following results are well-known;

\begin{theorem}
\cite{ABBO} Let $M$ be a generalized rotation surface given by the
parametrization (\ref{C2}) is pseudo-umbilical then 
\begin{equation}
(c^{2}f{}^{\text{ }2}+d^{2}g{}^{2})(g^{\text{ }\prime }f{}^{\text{ }\prime
\prime }-f^{\text{ }\prime }g^{\text{ }\prime \prime })-(d^{2}g^{\text{ }%
}f{}^{\text{ }\prime }-c^{2}fg^{\text{ }\prime })((f{}^{\text{ }\prime
})^{2}+(g{}^{\prime })^{2})=0.  \label{C26}
\end{equation}
\end{theorem}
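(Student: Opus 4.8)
The plan is to argue directly from the equivalent characterization (\ref{C25}) of pseudo-umbilicity, testing it on the orthonormal tangent frame $\{e_{1},e_{2}\}$. Since $\langle e_{i},e_{j}\rangle=\delta_{ij}$, condition (\ref{C25}) becomes $\langle h(e_{i},e_{j}),\overrightarrow{H}\rangle=\lambda^{2}\delta_{ij}$ for all $i,j$, so the whole statement reduces to comparing the two normal components of $h$ against the single normal direction that carries $\overrightarrow{H}$. The key structural input, supplied by Proposition 3, is that the mean curvature vector points purely along $e_{3}$: $\overrightarrow{H}=\tfrac12(h_{11}^{1}+h_{22}^{1})e_{3}$, whence $\lambda^{2}=\Vert\overrightarrow{H}\Vert^{2}=\tfrac14(h_{11}^{1}+h_{22}^{1})^{2}$.

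Next I would write $h(e_{i},e_{j})=h_{ij}^{1}e_{3}+h_{ij}^{2}e_{4}$ as in (\ref{C6}) and take inner products, obtaining $\langle h(e_{i},e_{j}),\overrightarrow{H}\rangle=\tfrac12(h_{11}^{1}+h_{22}^{1})\,h_{ij}^{1}$. The off-diagonal test $(i,j)=(1,2)$ is then automatic, because $h_{12}^{1}=0$ by (\ref{C6}), so $h(e_{1},e_{2})$ lies along $e_{4}$ and is orthogonal to $\overrightarrow{H}$. The two diagonal tests read $\tfrac12(h_{11}^{1}+h_{22}^{1})h_{11}^{1}=\lambda^{2}$ and $\tfrac12(h_{11}^{1}+h_{22}^{1})h_{22}^{1}=\lambda^{2}$; subtracting gives $\tfrac12(h_{11}^{1}+h_{22}^{1})(h_{11}^{1}-h_{22}^{1})=0$. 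Off the minimal locus (where $h_{11}^{1}+h_{22}^{1}=0$ by Corollary 4) pseudo-umbilicity therefore forces $h_{11}^{1}=h_{22}^{1}$, and conversely $h_{11}^{1}=h_{22}^{1}$ turns both diagonal tests into the identity $\bigl(\tfrac12(h_{11}^{1}+h_{22}^{1})\bigr)^{2}=\lambda^{2}$. Thus the entire condition collapses to the single scalar equation $h_{11}^{1}=h_{22}^{1}$.

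It remains to translate $h_{11}^{1}=h_{22}^{1}$ into (\ref{C26}) using the explicit data (\ref{C6}). Putting
\[
h_{11}^{1}=\frac{d^{2}f^{\prime}g-c^{2}fg^{\prime}}{\sqrt{(f^{\prime})^{2}+(g^{\prime})^{2}}\,(c^{2}f^{2}+d^{2}g^{2})},\qquad h_{22}^{1}=\frac{g^{\prime}f^{\prime\prime}-f^{\prime}g^{\prime\prime}}{((f^{\prime})^{2}+(g^{\prime})^{2})^{3/2}}
\]
over the common denominator $((f^{\prime})^{2}+(g^{\prime})^{2})^{3/2}(c^{2}f^{2}+d^{2}g^{2})$, the numerator of $h_{22}^{1}-h_{11}^{1}$ is precisely the left-hand side of (\ref{C26}); since that denominator is nonzero by the regularity assumptions $c^{2}f^{2}+d^{2}g^{2}>0$ and $(f^{\prime})^{2}+(g^{\prime})^{2}>0$, the two curvatures agree exactly when (\ref{C26}) holds. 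I expect the main obstacle to be this final step: clearing the fractional power of $(f^{\prime})^{2}+(g^{\prime})^{2}$ correctly and verifying that the signs line up so the numerator reproduces (\ref{C26}) verbatim rather than up to a spurious factor. A secondary point worth flagging is the minimal case, where $\overrightarrow{H}=0$ makes $M$ pseudo-umbilical for free; the clean equivalence with (\ref{C26}) therefore lives on the complement of the minimal locus, and the stated implication is to be read with that understanding.
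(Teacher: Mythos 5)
Your proof is correct: reducing (\ref{C25}) on the frame $\{e_{1},e_{2}\}$ to $h_{11}^{1}=h_{22}^{1}$ (using $h_{12}^{1}=h_{11}^{2}=h_{22}^{2}=0$ and $\overrightarrow{H}\parallel e_{3}$) and then clearing denominators does reproduce (\ref{C26}) exactly, and your caveat about the minimal locus, where pseudo-umbilicity holds trivially without forcing (\ref{C26}), is a genuine and well-spotted imprecision in the theorem as stated. The paper itself gives no proof of this theorem (it is quoted from \cite{ABBO}), so there is nothing to compare against, but your argument is the natural one and is sound.
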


The converse statement of \ Theorem 11 is also valid.

\begin{corollary}
\cite{ABBO} Let $M$ be a Vranceanu rotation surface in Euclidean 4-space. If 
$M$ pseudo-umbilical then $r(u)=\lambda e^{\mu u}$, where $\lambda $ and $%
\mu $ are real constants.
\end{corollary}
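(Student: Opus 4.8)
The plan is to specialize the pseudo-umbilical condition of Theorem~11 to the Vranceanu case and to read off an ordinary differential equation for the profile function $r(u)$. A Vranceanu rotation surface is exactly the surface (\ref{C2}) with $f=r\cos u$, $g=r\sin u$ and $c=d=1$, so the argument amounts to inserting these data into equation (\ref{C26}) and simplifying.

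First I would record $f'=r'\cos u-r\sin u$ and $g'=r'\sin u+r\cos u$, compute $f''$ and $g''$, and then assemble the four scalar quantities occurring in (\ref{C26}). I expect the following simplifications,
\begin{align*}
c^{2}f^{2}+d^{2}g^{2}&=r^{2}, & (f')^{2}+(g')^{2}&=(r')^{2}+r^{2},\\
d^{2}gf'-c^{2}fg'&=-r^{2}, & g'f''-f'g''&=rr''-2(r')^{2}-r^{2}
\end{align*}
in which the first three collapse immediately by the Pythagorean identity, while the fourth is the only one demanding real bookkeeping: the cross terms carrying $\sin u\cos u$ and $r'r''$ cancel in pairs before $\sin^{2}u+\cos^{2}u=1$ removes the remaining angular dependence.

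Feeding these into (\ref{C26}), I expect the left-hand side to become
\begin{equation*}
r^{2}\bigl(rr''-2(r')^{2}-r^{2}\bigr)+r^{2}\bigl((r')^{2}+r^{2}\bigr)=r^{2}\bigl(rr''-(r')^{2}\bigr).
\end{equation*}
Regularity forces $c^{2}f^{2}+d^{2}g^{2}=r^{2}>0$, so $r$ never vanishes and the factor $r^{2}$ can be divided out, leaving precisely $rr''-(r')^{2}=0$. This is the same equation that governs the flat Vranceanu surfaces in Remark~6, a coincidence worth noting.

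It then remains to integrate $rr''-(r')^{2}=0$. Putting $w=r'/r=(\log r)'$, which is legitimate since $r\neq0$, one finds $w'=(rr''-(r')^{2})/r^{2}=0$; hence $w$ is a constant $\mu$, and $\log r=\mu u+\text{const}$ gives $r(u)=\lambda e^{\mu u}$ with $\lambda,\mu\in\mathbb{R}$, as asserted. The only real obstacle in the whole argument is algebraic: shepherding the second-derivative combination $g'f''-f'g''$ to its compact form $rr''-2(r')^{2}-r^{2}$, after which the factorization $r^{2}(rr''-(r')^{2})$ and the solution of the resulting ODE are routine.
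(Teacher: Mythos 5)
Your proposal is correct and follows exactly the route the paper intends: the corollary is quoted from \cite{ABBO} without a written proof, but the pattern of Remark 6 (substitute the Vranceanu data $f=r\cos u$, $g=r\sin u$, $c=d=1$ into the relevant condition, reduce to $rr''-(r')^2=0$, and integrate to $r(u)=\lambda e^{\mu u}$) is precisely what you carry out for equation (\ref{C26}), and all of your intermediate simplifications check out. Your observation that the pseudo-umbilical condition and the flatness condition collapse to the same ODE in the Vranceanu case is accurate and consistent with the paper's Corollaries 7 and 12 yielding the same profile function.
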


\subsection{Coordinate Finite Type Surfaces in Euclidean Spaces}

In the present section we consider coordinate finite type surfaces in
Euclidean spaces $\mathbb{E}^{n+2}$. A surface $M$ in Euclidean $m$-space is
called coordinate finite type if the position vector field $X$ satisfies the
differential equation 
\begin{equation}
\Delta X=AX,  \label{D1*}
\end{equation}%
for some $m\times m$ diagonal matrix $A$. Using the Beltrami formula's $%
\Delta X=-2\overrightarrow{H}$, with (\ref{A8}) one can get 
\begin{equation}
\Delta X=-\sum_{k=1}^{n}(h_{11}^{k}+h_{22}^{k})N_{k}.  \label{D2}
\end{equation}

So, using \ (\ref{D1*}) with (\ref{D2}) the coordinate finite type condition
reduces to

\begin{equation}
AX=-\sum_{k=1}^{n}(h_{11}^{k}+h_{22}^{k})N_{k}  \label{D2*}
\end{equation}

For a non-compact surface in $\mathbb{E}^{4}$ O.J.Garay obtained the
following:

\begin{theorem}
\cite{Go2} The only coordinate finite type surfaces in Euclidean 4-space $%
\mathbb{E}^{4}$ with constant mean curvature are the open parts of the
following surfaces:

$i)$ a minimal surface in $\mathbb{E}^{4},$

$ii)$ a minimal surface in some hypersphere $S^{3}(r),$

$iii)$ a helical cylinder,

$iv)$ a flat torus $S^{1}(a)\times S^{1}(b)$ in some hypersphere $S^{3}(r)$.
\end{theorem}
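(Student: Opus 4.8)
The plan is to read $\Delta X$ in two ways and then to exploit the spectral structure of the constant matrix $A$. Combining the hypothesis (\ref{D1*}) with the Beltrami formula $\Delta X=-2\overrightarrow{H}$ gives $\overrightarrow{H}=-\frac{1}{2}AX$. Because $A$ is constant it commutes with the component-wise Laplacian, so applying $\Delta$ and reusing (\ref{D1*}) yields
\begin{equation*}
\Delta \overrightarrow{H}=-\tfrac{1}{2}A\,\Delta X=-\tfrac{1}{2}A^{2}X=A\overrightarrow{H}.
\end{equation*}
Differentiating $\overrightarrow{H}=-\frac{1}{2}AX$ along a tangent vector $e$ and separating tangential and normal parts by the Weingarten formula produces $2A_{\overrightarrow{H}}e=(Ae)^{T}$ and $-2D_{e}\overrightarrow{H}=(Ae)^{\perp }$, where $D$ is the normal connection and $A_{\overrightarrow{H}}$ the shape operator in the $\overrightarrow{H}$-direction; thus $A_{\overrightarrow{H}}$ is exactly half the tangential projection of $A$.

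Since $A$ is diagonal, $\mathbb{E}^{4}$ splits orthogonally into its coordinate eigenspaces $V_{\mu }$, and correspondingly $X=\sum_{\mu }X_{\mu }$ with $\Delta X_{\mu }=\mu X_{\mu }$ and $\overrightarrow{H}_{\mu }=-\frac{1}{2}\mu X_{\mu }$. If all $\mu =0$ then $\Delta X=0$ and $M$ is minimal, which is case $(i)$; so assume $\overrightarrow{H}\not\equiv 0$. As $H=\Vert \overrightarrow{H}\Vert $ is constant we then have $\overrightarrow{H}\neq 0$ everywhere, $\langle D_{e}\overrightarrow{H},\overrightarrow{H}\rangle =0$, and $\Delta H^{2}=0$. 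The crucial reduction is to show that at most two distinct eigenvalues of $A$ occur on $M$, i.e. that $M$ is of $1$-type or $2$-type. I would obtain this by substituting $\Delta \overrightarrow{H}=A\overrightarrow{H}$ into the standard Chen formula for $\Delta \overrightarrow{H}$ and pairing with $\overrightarrow{H}$; using $\Delta H^{2}=0$ this produces a relation among $\sum_{\mu }\mu \Vert \overrightarrow{H}_{\mu }\Vert ^{2}$, $\mathrm{tr}(A_{\overrightarrow{H}}^{2})$ and $\Vert D\overrightarrow{H}\Vert ^{2}$ which, together with the trace of $2A_{\overrightarrow{H}}=(A\,\cdot \,)^{T}$, forces the spectral support of $X$ to consist of at most two values.

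It then remains to identify the two surviving cases. If a single eigenvalue $\lambda $ occurs, then $A=\lambda I$ and $\Delta X=\lambda X$; Takahashi's theorem \cite{Ta} gives case $(i)$ for $\lambda =0$ and case $(ii)$, a minimal surface of the hypersphere $S^{3}(\sqrt{2/\lambda })$, for $\lambda >0$. If two distinct eigenvalues $\mu _{1},\mu _{2}$ occur, write $\mathbb{E}^{4}=V_{\mu _{1}}\oplus V_{\mu _{2}}$ and $X=X_{1}+X_{2}$ with $\Delta X_{i}=\mu _{i}X_{i}$. Using $\widetilde{\nabla }_{e}X=e$ together with $\dim M=2$, I would show that each component $X_{i}$ parametrizes an open arc of a round circle when $\mu _{i}>0$ and a straight line when $\mu _{i}=0$, so that $M$ is an open part of a Riemannian product: a helical cylinder when one eigenvalue vanishes (case $(iii)$) and a flat torus $S^{1}(a)\times S^{1}(b)$ lying in $S^{3}(\sqrt{a^{2}+b^{2}})$ when both are positive (case $(iv)$).

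The main obstacle is precisely this final identification. The two-eigenvalue bound requires careful control of the Chen and Codazzi terms, and in the two-eigenvalue case one must rule out the unbalanced distributions of the coordinates (a $3{+}1$ or a $2{+}1{+}1$ pattern) and prove that each $1$-type component is a genuine round circle rather than an arbitrary planar curve. I expect the delicate part to be showing that the normal-connection coefficient $B(u)$ and the off-diagonal entries of the shape operators vanish, since it is exactly this vanishing, forced by the constancy of $H$ and the product structure, that collapses the local normal form to the cylinder and the Clifford-type torus.
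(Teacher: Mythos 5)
This theorem is not proved in the paper at all: it is imported verbatim from Garay \cite{Go2}, so there is no in-paper argument to measure your attempt against. Judged on its own, your outline starts correctly --- $\overrightarrow{H}=-\tfrac{1}{2}AX$, $\Delta \overrightarrow{H}=A\overrightarrow{H}$, the identities $2A_{\overrightarrow{H}}e=(Ae)^{T}$ and $-2D_{e}\overrightarrow{H}=(Ae)^{\perp }$, and the coordinate eigenspace decomposition are all sound and are indeed the standard opening moves for this kind of classification --- but the two steps that carry the entire theorem are left as declarations of intent rather than arguments. First, the reduction to at most two distinct eigenvalues: pairing $\Delta \overrightarrow{H}=A\overrightarrow{H}$ with $\overrightarrow{H}$ and using $\Delta H^{2}=0$ yields a single scalar identity, and it is not clear why one scalar relation should constrain the spectral support of $X$, which for a diagonal $4\times 4$ matrix can a priori consist of four distinct values. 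You would need to actually write out Chen's formula for $\Delta\overrightarrow{H}$, separate its tangential, $\overrightarrow{H}$-direction and normal-connection components, and bring in the Codazzi equation; as written, ``forces the spectral support to consist of at most two values'' is an assertion, not a deduction.

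Second, even granting two eigenvalues, the identification of the surfaces is incomplete in exactly the places you yourself flag. You must rule out the $3{+}1$ and $2{+}1{+}1$ distributions of the coordinates among eigenspaces, prove that the surface genuinely splits as a Riemannian product (integrability and parallelism of the two distributions, i.e.\ a de~Rham-type argument, not merely a linear-algebra splitting of $\mathbb{E}^{4}$), and show each positive-eigenvalue component is a round circle rather than an arbitrary curve in a sphere. There is also a gap in the zero-eigenvalue case: a coordinate with $a_{ii}=0$ is merely harmonic on a non-compact surface, so the claim that such a component ``parametrizes a straight line'' needs a separate argument (this is precisely where the helical cylinder of case $(iii)$ and the null-2-type subtleties enter). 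In short, the skeleton is right and matches how Garay-type classifications are usually organized, but the proof as submitted establishes only the preliminary identities; the classification itself is not yet proved.
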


In \cite{CDVV} Chen-Dillen-Verstraelen-Vrancken proved the following theorem;

\begin{theorem}
\cite{CDVV} Assume $M$ is a surface in $\mathbb{E}^{4}$ that is immersed in $%
S^{3}(r)$ and has constant mean curvature. Then $M$ is of restricted type if
and only if $M$ is one of the following:

$i)$ an open part of a minimal surface of $S^{3}(r),$

$ii)$ an open part of $S^{2}(r^{\prime })$ for $0<r^{\prime }\leq r$ ,

$iii)$ an open part of the product of two circles $S^{1}(a)\times S^{1}(b),$
where $a,b>0$ and $a^{2}+b^{2}=r^{2}.$
\end{theorem}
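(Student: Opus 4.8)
The plan is to use the special position of $M$ inside the hypersphere to convert the restricted-type condition $\Delta x = Ax$ (for a constant $4\times 4$ matrix $A$, the natural non-diagonal relaxation of the coordinate-finite-type condition) into a differential condition on the shape operator of $M$ in $S^{3}(r)$, and then to invoke the classification of surfaces with parallel second fundamental form in $S^{3}(r)$. First I would fix an adapted frame: place $S^{3}(r)$ at the origin, take $\nu = x/r$ as the unit normal of $S^{3}(r)$ in $\mathbb{E}^{4}$, let $\xi$ be a unit normal of $M$ in $S^{3}(r)$, and let $S$ be the associated shape operator with $\operatorname{tr}S = 2\alpha$, where $\alpha$ is the (constant, by hypothesis) mean curvature of $M$ in $S^{3}(r)$. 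Since the second fundamental form of $S^{3}(r)$ in $\mathbb{E}^{4}$ is $-\tfrac{1}{r^{2}}\langle\cdot,\cdot\rangle x$, the mean curvature vector of $M$ in $\mathbb{E}^{4}$ is $H = \alpha\xi - \tfrac{1}{r^{2}}x$, and Beltrami's formula $\Delta x = -2H$ gives
\begin{equation*}
\Delta x = -2\alpha\xi + \tfrac{2}{r^{2}}x ,
\end{equation*}
so the restricted-type hypothesis reads $Ax = -2\alpha\xi + \tfrac{2}{r^{2}}x$ (any additive constant vector is killed by the differentiations below, so it is harmless).

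The heart of the argument is to differentiate this identity tangentially, using $\widetilde{\nabla}_{X}x = X$ and the constancy of $A$ and of $\alpha$. A first differentiation, together with the Weingarten relation $\widetilde{\nabla}_{X}\xi = -SX$ in $S^{3}(r)$, shows that $A$ maps $TM$ into itself with
\begin{equation*}
AX = 2\alpha\,SX + \tfrac{2}{r^{2}}X ,\qquad X\in TM .
\end{equation*}
Differentiating this relation once more in a tangent direction $Y$, substituting the Gauss formula $h(X,Y)=\langle SX,Y\rangle\xi - \tfrac{1}{r^{2}}\langle X,Y\rangle x$ and again the constancy of $A$, and comparing the components tangent to $M$, I expect to arrive at the key identity
\begin{equation*}
2\alpha\,(\nabla_{Y}S)X = \langle SX,Y\rangle\,(A\xi)^{T},
\end{equation*}
where $(A\xi)^{T}$ is the tangential part of $A\xi$. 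Tracing over an orthonormal basis, using that $\nabla_{e_i}S$ is self-adjoint together with the Codazzi equation to get $\sum_i(\nabla_{e_i}S)e_i=\operatorname{grad}(\operatorname{tr}S)=0$ (constant mean curvature), forces $(A\xi)^{T}=0$, and then the key identity yields $\nabla S = 0$ whenever $\alpha\neq 0$; that is, $M$ has parallel second fundamental form in $S^{3}(r)$.

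This reduces the theorem to two clean situations. If $\alpha = 0$ then $M$ is minimal in $S^{3}(r)$ (case $i$), and the first displayed formula already gives $\Delta x = \tfrac{2}{r^{2}}x$, so such $M$ is automatically of ($1$-)restricted type. If $\alpha\neq 0$ then $\nabla S = 0$, and the classification of surfaces with parallel second fundamental form in $S^{3}(r)$ produces exactly the totally umbilical spheres $S^{2}(r')$ (case $ii$) and the flat tori $S^{1}(a)\times S^{1}(b)$ with $a^{2}+b^{2}=r^{2}$ (case $iii$). For the converse I would check directly that each of $i)$–$iii)$ satisfies $\Delta x = Ax$ for a suitable constant $A$: the minimal case is $1$-type as just noted, while the round sphere $S^{2}(r')$ and the flat torus have position vectors satisfying a constant-coefficient equation as standard $1$- or $2$-type submanifolds.

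The main obstacle I anticipate is the bookkeeping in the second differentiation: one must carefully split the result into its tangential, $\xi$- and $x$-components, correctly isolate the $(\nabla_{Y}S)X$ term from the $S^{2}$ curvature terms, and verify that the trace/Codazzi step genuinely annihilates $(A\xi)^{T}$ rather than merely constraining it. A secondary point requiring care is that the restricted-type matrix $A$ need not be symmetric, so throughout I would avoid invoking self-adjointness of $A$ and rely only on the tangential-component equation, which does not require it.
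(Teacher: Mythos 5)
This theorem is imported by the paper from \cite{CDVV} and stated without any proof, so there is no in-paper argument to measure your proposal against; I can only assess it on its own terms, and on those terms it is a correct reconstruction of the standard argument. The computations check: with $\nu=x/r$ and $\xi$ as you set them up one gets $H=\alpha\xi-\tfrac{1}{r^{2}}x$, the first tangential differentiation gives $AX=2\alpha SX+\tfrac{2}{r^{2}}X$, and in the second differentiation the term $-\tfrac{1}{r^{2}}\langle X,Y\rangle Ax$ disappears from the tangential component because $Ax=-2\alpha\xi+\tfrac{2}{r^{2}}x$ is purely normal, leaving exactly your identity $2\alpha(\nabla_{Y}S)X=\langle SX,Y\rangle(A\xi)^{T}$; the trace of this over an orthonormal frame gives $2\alpha(A\xi)^{T}=2\alpha\,\mathrm{grad}(\operatorname{tr}S)=0$, so for $\alpha\neq0$ indeed $(A\xi)^{T}=0$ and then $\nabla S=0$. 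The only external inputs are the classification of surfaces of $S^{3}(r)$ with parallel shape operator (totally umbilical spheres and products of circles), which is standard, and the converse verification, which is routine since each of the surfaces in $i)$--$iii)$ satisfies $\Delta x=Ax+B$ with $A$ constant (the small sphere $S^{2}(r^{\prime})$, $r^{\prime}<r$, genuinely needs the constant vector $B$ since its center is not the origin, consistent with your remark that additive constants are harmless). Your caution about not assuming $A$ symmetric is well placed. The only cosmetic point is that the three cases overlap (a great $S^{2}(r)$ and the Clifford torus with $a=b$ are minimal in $S^{3}(r)$ and so also fall under case $i$), which does no harm to an ``is one of the following'' classification.
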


\subsection{ Surface of Revolution of Coordinate Finite Type}

A surface in $\mathbb{E}^{3}$ is called a surface of revolution if it is
generated by a curve $C$ on a plane $\Pi $ when $\Pi $ is rotated around a
straight line $L$ in $\Pi .$ By choosing $\Pi $ to be the $xz$-plane and
line $L$ to be the $x$ axis the surface of revolution can be parameterized by%
\begin{equation}
X(u,v)=\left( f(u),g(u)\cos v,g(u)\sin v\right) ,  \label{F1}
\end{equation}%
where $f(u)$ and $g(u)$ are arbitrary smooth functions. We choose an
orthonormal frame $\left \{ e_{1},e_{2},e_{3}\right \} $ such that $%
e_{1},e_{2} $ are tangent to $M$ and $e_{3}$ normal to $M$ in the following:
\qquad 
\begin{equation}
e_{1}=\frac{X_{u}}{\left \Vert X_{u}\right \Vert },\text{ }e_{2}=\frac{X_{v}}{%
\left \Vert X_{v}\right \Vert },\ e_{3}=\frac{1}{\sqrt{(f^{\prime
})^{2}+(g^{\prime })^{2}}}(g^{\prime },-f^{\prime }\cos v,-f^{\prime }\sin
v),  \label{f1}
\end{equation}

By covariant differentiation with respect to $e_{1},e_{2}$ a straightforward
calculation gives 
\begin{eqnarray}
\tilde{\nabla}_{e_{1}}e_{1} &=&h_{11}^{1}e_{3},  \notag \\
\tilde{\nabla}_{e_{2}}e_{2} &=&-A(u)e_{1}+h_{22}^{2}e_{3},  \label{F2} \\
\tilde{\nabla}_{e_{2}}e_{1} &=&A(u)e_{2},  \notag \\
\tilde{\nabla}_{e_{1}}e_{2} &=&0,  \notag
\end{eqnarray}%
where\qquad 
\begin{eqnarray}
A(u) &=&\frac{g^{\prime }}{g\sqrt{(f^{\prime })^{2}+(g^{\prime })^{2}}}, 
\notag \\
h_{11}^{1} &=&\frac{g^{\prime }f^{\prime \prime }-f^{\prime }g^{\prime
\prime }}{\left( (f^{\prime })^{2}+(g^{\prime })^{2}\right) ^{\frac{3}{2}}},
\label{F3} \\
h_{22}^{1} &=&\frac{f^{\prime }}{g\sqrt{(f^{\prime })^{2}+(g^{\prime })^{2}}}%
,  \notag \\
h_{12}^{1} &=&0.  \notag
\end{eqnarray}%
are the differentiable functions. Using (\ref{A7})-(\ref{A8}) with (\ref{F3}%
) one can get%
\begin{equation}
\overrightarrow{H}=\frac{1}{2}\left( h_{11}^{1}+h_{22}^{1}\right) e_{3}
\label{F4}
\end{equation}%
where $h_{11}^{1}$ and $h_{22}^{1}$ are the coefficients of the second
fundamental form given in (\ref{F3}).

A surface of revolution defined by (\ref{F1}) is said to be of polynomial
kind if $f(u)$ and $g(u)$ are polynomial functions in $u$ and it is said to
be of rational kind if $f$ is a rational function in $g,$ i.e., $f$ \ is the
quotient of two polynomial functions in $g$ \cite{Ch3}.

For finite type surfaces of revolution B.Y. Chen and S. Ishikawa obtained in 
\cite{CI} the following results;

\begin{theorem}
\cite{CI} Let $M$ be a surface of revolution of polynomial kind. Then $M$ is
a surface of finite type if and only if either it is an open portion of a
plane or it is an open portion of a circular cylinder.
\end{theorem}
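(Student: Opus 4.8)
The plan is to characterize finite-type surfaces of revolution of polynomial kind by computing the Laplacian of the position vector and forcing each coordinate function to be of finite type, then analyzing the resulting polynomial constraints on $f$ and $g$.

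First I would recall the Beltrami-type formula $\Delta X = -2\overrightarrow{H}$ and combine it with the expression (\ref{F4}) for the mean curvature vector, so that $\Delta X = -(h_{11}^{1}+h_{22}^{1})e_{3}$ where $e_3$ is the unit normal from (\ref{f1}) and the curvature coefficients are given by (\ref{F3}). Writing this out in the ambient coordinates yields explicit formulas for $\Delta X_1$, $\Delta X_2$, $\Delta X_3$ as functions of $u$ (times $\cos v$ or $\sin v$ for the second and third coordinates). The key reduction is that $M$ is of finite type if and only if each coordinate function satisfies a finite-degree polynomial differential equation $P(\Delta)\,X_j = 0$ for a common polynomial $P$; equivalently, iterating $\Delta$ on each coordinate produces a space of functions that is finite-dimensional over $\mathbb{R}$.

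Next I would exploit the polynomial-kind hypothesis. Since $f$ and $g$ are polynomials in $u$, the quantity $(f')^2+(g')^2$ and the curvature expressions in (\ref{F3}) are rational functions of $u$; applying $\Delta$, which here is a second-order differential operator with coefficients built from $E$, $G$ and their derivatives, repeatedly generates rational functions whose denominators involve growing powers of $(f')^2+(g')^2$ and of $g$. The finite-type condition demands that this infinite iteration collapse into a finite-dimensional span, which can only happen if the rational parts degenerate. The strategy is to track degrees and denominators: for a genuine polynomial surface the only way the successive Laplacians stay in a finite-dimensional space is for the numerator/denominator growth to terminate, which pins down the admissible $(f,g)$.

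The main obstacle, and the technical heart of the argument, will be the degree/denominator bookkeeping that forces $f$ and $g$ into one of the two exceptional forms. I expect the analysis to split into cases according to whether $g'$ vanishes identically (or is constant) and whether the Gaussian-type numerator $g'f''-f'g''$ vanishes. The case $g'f''-f'g''\equiv 0$ means the profile curve is a straight line, and combined with polynomiality this yields either $f$ constant or $g$ linear, producing the plane or the circular cylinder respectively. The harder direction is ruling out every other polynomial pair: one must show that any nonconstant, non-flat polynomial profile produces denominators in the iterated Laplacians that cannot be annihilated by a fixed polynomial $P(\Delta)$, so no finite-type structure exists. Conversely, verifying that the plane and the circular cylinder are indeed of finite type is a direct computation, since for the cylinder the coordinate functions are $\cos v$, $\sin v$ (eigenfunctions) and a linear function of $u$ (harmonic), so each is a sum of at most two eigenfunctions of $\Delta$.
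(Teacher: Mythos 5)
This theorem is quoted in the paper from Chen--Ishikawa \cite{CI}; the paper itself contains no proof of it, so your proposal can only be judged on its own merits. The overall strategy you describe --- reduce finite type to the finite-dimensionality of $\mathrm{span}\{X,\Delta X,\Delta^{2}X,\dots\}$, observe that for polynomial $f,g$ the iterated Laplacians are rational functions of $u$, and show the denominators cannot collapse except in degenerate cases --- is indeed the standard and correct line of attack for this kind of classification. But as written the proposal has two genuine gaps.

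First, your treatment of the case $g'f''-f'g''\equiv 0$ is wrong. A straight-line polynomial profile gives three surfaces, not two: $f$ constant gives a plane, $g$ constant gives the circular cylinder, and $f,g$ both linear and nonconstant gives a circular cone. Your phrase ``$g$ linear, producing \dots the circular cylinder'' conflates the last two; the cylinder requires $g$ \emph{constant}, while $g$ linear nonconstant yields the cone, which is of polynomial kind, has $g'f''-f'g''\equiv 0$, and is \emph{not} of finite type. Excluding the cone is a nontrivial computation (its iterated Laplacians produce negative powers of $u$ of strictly increasing order, so the span is infinite-dimensional) and cannot be absorbed into the ``plane or cylinder'' dichotomy. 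Second, the heart of the theorem --- showing that \emph{every} other polynomial pair $(f,g)$ fails to be of finite type --- is only announced, not argued. Saying that ``the numerator/denominator growth must terminate'' is the statement to be proved, not a proof: one needs an actual invariant (e.g.\ the pole order of $\Delta^{k}X$ at a zero of $(f')^{2}+(g')^{2}$ or of $g$, or the degree at infinity) together with an induction showing this invariant is strictly monotone in $k$, so that no fixed polynomial $P(\Delta)$ can annihilate $X-X_{0}$. Without identifying and controlling such an invariant the argument does not close, and this is precisely where all the work in Chen--Ishikawa's proof lies. The converse direction (plane and cylinder are finite type) is fine as you state it.
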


\begin{theorem}
\cite{CI} Let $M$ be a surface of revolution of rational kind. Then $M$ is a
surface of finite type if and only if $M$ is an open portion of a plane.
\end{theorem}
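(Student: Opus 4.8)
The plan is to establish the claim for a surface of revolution of \emph{rational kind} by the same Laplacian-computation machinery already used in the polynomial case, then exploit the rationality hypothesis to force the generating profile curve to be a straight line. First I would set up the coordinate finite type condition $\Delta X = AX$ explicitly in the parametrization (\ref{F1}). Using (\ref{A9}) together with the frame data (\ref{F2})--(\ref{F3}), I would compute $\Delta X$ componentwise; by the Beltrami relation $\Delta X=-2\overrightarrow{H}$ and (\ref{F4}), this reduces to evaluating $(h_{11}^{1}+h_{22}^{1})e_{3}$ as an explicit $\mathbb{E}^{3}$-valued function of $u$ and $v$. Writing $A=\mathrm{diag}(\lambda_{1},\lambda_{2},\lambda_{3})$ and comparing the three coordinates of $AX$ against those of $-2\overrightarrow{H}$ would yield a system of ordinary differential equations in $f(u)$ and $g(u)$.

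Next I would extract the structural consequences of that ODE system. The second and third coordinates of $X$ share the common factor $g(u)$ multiplied by $\cos v$ and $\sin v$ respectively, so matching them forces $\lambda_{2}=\lambda_{3}$ and produces one equation relating $g$, its derivatives, and the curvature coefficients. The first coordinate gives a second, independent equation involving $f$. The key step is then to impose the rational-kind hypothesis: $f$ is a quotient of two polynomials in $g$. Substituting this relation into the differential system, I would treat the equations as polynomial identities in the variable $g$ after clearing denominators, and compare coefficients. The rigidity of polynomial identities is what collapses the family of admissible profiles.

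The main obstacle I anticipate is the coefficient-comparison bookkeeping once $f=P(g)/Q(g)$ is substituted: the derivatives $f'$ and $f''$ introduce $g'$, $g''$ and the reciprocal $1/g$ together with the radical $\sqrt{(f')^{2}+(g')^{2}}$, so the resulting identity is not manifestly polynomial and must first be rationalized and cleared of the square-root denominators appearing in (\ref{F3}). Handling the square root carefully, I expect that the highest-degree terms in $g$ can only balance if the rational function $f$ is in fact constant in $g$ or, equivalently, if the profile curve degenerates so that its mean curvature vanishes in the required directions. The finite-type hypothesis then eliminates all but the planar case, since a nonplanar rational-kind profile would require infinitely many distinct eigenvalues, contradicting the finiteness of the spectrum encoded by the diagonal matrix $A$.

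Finally I would conclude by checking the converse: that an open portion of a plane, realized as a surface of revolution with $f$ affine in $g$, indeed satisfies $\Delta X=AX$ and is trivially of finite type, thereby giving the stated equivalence. I would note that the argument parallels the polynomial-kind Theorem but that the circular cylinder, which survives in the polynomial case, is excluded here precisely because a cylinder cannot be written with $f$ a genuine rational (non-affine) function of $g$ while remaining of rational kind, leaving the plane as the only possibility.
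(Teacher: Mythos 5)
The central difficulty with your proposal is that it addresses the wrong condition. The statement concerns surfaces of \emph{finite type} in Chen's sense: the position vector decomposes as $x=x_{0}+\sum_{i=1}^{k}x_{i}$ with $\Delta x_{i}=\lambda _{i}x_{i}$, equivalently there is a monic polynomial $P$ with $P(\Delta )(x-x_{0})=0$. Your entire argument is built on the single equation $\Delta X=AX$ with $A$ diagonal, which is the \emph{coordinate finite type} condition --- a strictly special subclass of finite type surfaces (those of type at most $m$ whose spectral decomposition is aligned with the coordinate axes). A surface of revolution can perfectly well be of $2$-type or $3$-type without satisfying $\Delta X=AX$ for any diagonal $A$, so deriving an ODE system from $\Delta X=AX$ and showing that its only rational-kind solutions are planes establishes nothing about the ``only if'' direction of the theorem as stated. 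Relatedly, your closing claim that a nonplanar profile ``would require infinitely many distinct eigenvalues, contradicting the finiteness of the spectrum encoded by the diagonal matrix $A$'' conflates the eigenvalues of the decomposition of $x$ with the three diagonal entries of $A$; for a general finite type surface there is no such matrix at all.

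Note also that the paper offers no proof of this statement --- it is quoted from Chen and Ishikawa --- so your attempt must be measured against the argument in that reference. The proof there proceeds from the minimal polynomial criterion: if $M$ is of finite type there are constants $c_{1},\dots ,c_{k}$ with $\Delta ^{k+1}x+c_{1}\Delta ^{k}x+\cdots +c_{k}\Delta x=0$, equivalently a polynomial relation on the iterated Laplacians of the mean curvature vector. One computes these iterated Laplacians for the profile $(f(u),g(u))$, uses the rational-kind hypothesis $f=P(g)/Q(g)$ to rewrite everything as rational expressions in $g$, and compares degrees and leading terms in the resulting identity to force the profile to degenerate to an affine one, i.e.\ a plane. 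The degree-comparison instinct in your third paragraph is the right one, but it must be applied to that full hierarchy of equations of arbitrary order $k$, not to a single first-order spectral equation; as written, your proof would at best classify the coordinate finite type rational-kind surfaces of revolution, which is a different (and weaker) assertion.
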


T. Hasanis and T. Vlachos proved the following.

\begin{theorem}
\cite{HV1} Let $M$ be a surface of revolution. If $M$ has constant mean
curvature and is of finite type then $M$ is an open portion of a plane, of a
sphere or of a circular cylinder.
\end{theorem}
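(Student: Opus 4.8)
The plan is to characterize surfaces of revolution of constant mean curvature that are also of finite type, and to show this forces one of the three conclusions. First I would set up the parametrization (\ref{F1}) together with the structure equations (\ref{F2})--(\ref{F3}). Without loss of generality I would reparametrize by arc length along the profile curve, imposing $(f^{\prime})^{2}+(g^{\prime})^{2}=1$; this simplifies the formulas in (\ref{F3}) to $h_{11}^{1}=g^{\prime}f^{\prime\prime}-f^{\prime}g^{\prime\prime}$ and $h_{22}^{1}=f^{\prime}/g$, and makes the mean curvature in (\ref{F4}) a manageable function of $u$. The constant mean curvature hypothesis then gives a first-order/second-order ODE relating $f$ and $g$.

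Next I would exploit the finite-type condition directly. A surface is of finite type precisely when its position vector $X$ satisfies a polynomial equation in $\Delta$, i.e. there exist real constants such that $\Delta^{k}X + a_{k-1}\Delta^{k-1}X + \cdots + a_{1}\Delta X = 0$ (after subtracting the center of mass). Using the Beltrami relation $\Delta X = -2\overrightarrow{H}$ from the text, the finite-type condition becomes a condition on iterated Laplacians of $\overrightarrow{H}$. The key computation is therefore to evaluate $\Delta \overrightarrow{H}$ for the surface of revolution. Since $M$ has constant mean curvature, $\overrightarrow{H}=\alpha e_{3}$ with $\alpha$ constant, so $\Delta \overrightarrow{H}$ reduces to $\alpha \Delta e_{3}$, and one computes $\Delta e_{3}$ using the Weingarten-type formulas and the Laplacian definition (\ref{A9}) with the connection coefficients $A(u)$, $h_{11}^{1}$, $h_{22}^{1}$.

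I would then combine the two streams. The constant-mean-curvature ODE lets me reduce the profile curve to a one-parameter family, and substituting into the finite-type (iterated-Laplacian) relation yields an ordinary differential equation purely in the profile functions. Integrating this, together with the geometric interpretation of the three model surfaces, I would identify: (a) the plane, arising when the curvature functions vanish identically; (b) the sphere, arising when the profile is a circular arc giving a totally umbilic round surface; and (c) the circular cylinder, arising when $g$ is constant (so one of the principal curvatures vanishes while the mean curvature stays constant). Matching the surviving solutions of the ODE to these three cases completes the argument.

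The hard part will be the Laplacian computation of $\Delta e_{3}$ and its iterates: one must carefully track the normal and tangential components produced by differentiating the frame, and then convert the resulting transcendental/algebraic relation into an ODE whose complete solution set is exactly the three listed families. The delicate point is showing there are \emph{no other} finite-type solutions — that is, ruling out profiles for which the higher Laplacians generate a genuinely infinite sequence of independent components; this is where the constant-mean-curvature hypothesis is essential, since it collapses $\Delta \overrightarrow{H}$ into a low-dimensional system and prevents the finite-type relation from being vacuously satisfiable by exotic profiles.
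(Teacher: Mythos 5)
First, a point of comparison: the paper does not prove this statement at all --- it is quoted from Hasanis and Vlachos \cite{HV1} and used as a known classification --- so there is no internal argument to measure your proposal against. Judged on its own terms, your outline follows the standard template for results of this kind (arc-length normalization of the profile curve, the Beltrami relation $\Delta X=-2\overrightarrow{H}$, and the polynomial identity $\Delta ^{k}X+a_{k-1}\Delta ^{k-1}X+\cdots +a_{1}\Delta X=0$ satisfied by finite type immersions), and the observation that constant mean curvature collapses $\Delta \overrightarrow{H}$ to $\alpha \Delta e_{3}$ is correct.

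There is, however, a genuine gap, and it sits exactly where the content of the theorem lies. Constant mean curvature surfaces of revolution are completely classified (the Delaunay family: planes, spheres, circular cylinders, catenoids, unduloids, nodoids), so the whole burden of the proof is to show that an unduloid or a nodoid cannot be of finite type. Your proposal names this step (``the delicate point is showing there are no other finite-type solutions'') but supplies no mechanism for it: you never compute $\Delta e_{3}$, never write down the linear recurrence that the iterated Laplacians would have to satisfy, and give no criterion --- growth of the components, linear independence of $1,\Vert h\Vert ^{2},\Delta \Vert h\Vert ^{2},\dots $, or an order count on the resulting ODE for the profile --- by which that recurrence fails for the non-trivial Delaunay profiles. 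The standard argument shows that finite type plus constant mean curvature forces $\Vert h\Vert ^{2}$ to be constant, hence the surface to be isoparametric, and that is what actually produces the list plane/sphere/cylinder; nothing in your sketch reaches that conclusion. Two smaller defects: for non-compact surfaces the polynomial identity in $\Delta $ is only a consequence of finite type, not equivalent to it, so it should not be asserted as an ``if and only if'' (you only need the valid direction, but the claim as written is wrong); and your case (a) conflates ``minimal'' with ``plane'' --- the catenoid is a minimal surface of revolution, is of null $1$-type under the usual convention, and is not on the list, so a complete proof must either adopt a definition of finite type that excludes it or explain why it does not contradict the statement.
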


We proved the following result;

\begin{lemma}
Let $M$ be a surface of revolution given with the parametrization (\ref{F1}%
). Then $M$ is a surface of \textit{coordinate finite type if and only if }%
diagonal matrix $A$ is of the form%
\begin{equation}
A=\left[ 
\begin{array}{ccc}
a_{11} & 0 & 0 \\ 
0 & a_{22} & 0 \\ 
0 & 0 & a_{33}%
\end{array}%
\right]  \label{F5}
\end{equation}%
where%
\begin{eqnarray}
a_{11} &=&\frac{-g^{\prime }(g\left( g^{\prime }f^{\prime \prime }-f^{\prime
}g^{\prime \prime })+f^{\prime }\left( (f^{\prime })^{2}+(g^{\prime
})^{2}\right) \right) }{fg\left( (f^{\prime })^{2}+(g^{\prime })^{2}\right)
^{2}}  \label{F6} \\
a_{22} &=&a_{33}=\frac{f^{\prime }\left( g(g^{\prime }f^{\prime \prime
}-f^{\prime }g^{\prime \prime })+f^{\prime }\left( (f^{\prime
})^{2}+(g^{\prime })^{2}\right) \right) }{g^{2}\left( (f^{\prime
})^{2}+(g^{\prime })^{2}\right) ^{2}}  \notag
\end{eqnarray}%
are differentiable functions.
\end{lemma}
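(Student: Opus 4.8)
The plan is to reduce the coordinate-finite-type condition $\Delta X = AX$ to three scalar equations in the single variable $u$, using the Beltrami formula together with the explicit frame and curvature data already recorded in (\ref{f1}), (\ref{F3}) and (\ref{F4}). No fresh geometric input is needed; the whole statement is a matching of components.

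First I would invoke the Beltrami relation $\Delta X = -2\overrightarrow{H}$ and substitute the mean curvature vector $\overrightarrow{H}=\tfrac12(h_{11}^{1}+h_{22}^{1})e_{3}$ from (\ref{F4}), obtaining
\[
\Delta X = -(h_{11}^{1}+h_{22}^{1})\,e_{3}.
\]
Writing $W=\sqrt{(f')^{2}+(g')^{2}}$ and inserting the explicit form of $e_{3}$ from (\ref{f1}), this becomes the Euclidean vector
\[
\Delta X = \frac{h_{11}^{1}+h_{22}^{1}}{W}\bigl(-g',\, f'\cos v,\, f'\sin v\bigr).
\]

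Next I would impose $\Delta X = AX$ with $A=\operatorname{diag}(a_{11},a_{22},a_{33})$ and $X=(f,\,g\cos v,\,g\sin v)$, and compare the three components. The decisive structural point is that the factors $\cos v$ and $\sin v$ occur on both sides of the second and third equations and cancel, so the one vector identity collapses into three identities among functions of $u$ alone. Solving the first component for $a_{11}$ and the other two for $a_{22},a_{33}$ gives
\[
a_{11}=-\frac{(h_{11}^{1}+h_{22}^{1})\,g'}{Wf},\qquad a_{22}=a_{33}=\frac{(h_{11}^{1}+h_{22}^{1})\,f'}{Wg},
\]
which in particular forces $a_{22}=a_{33}$ and pins down the diagonal shape (\ref{F5}); conversely, substituting these values back reproduces the vector equation identically, yielding the reverse implication, and hence the equivalence.

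Finally I would perform the algebraic cleanup: replacing $h_{11}^{1}$ and $h_{22}^{1}$ by the closed forms in (\ref{F3}), putting $h_{11}^{1}+h_{22}^{1}$ over the common denominator $gW^{3}$, and using $W^{2}=(f')^{2}+(g')^{2}$ so that $W^{4}=((f')^{2}+(g')^{2})^{2}$; the two entries then collapse precisely into the expressions (\ref{F6}). I expect the main (though purely routine) labour to be this bookkeeping of denominators, and the only points needing care are the regularity hypotheses $f\neq 0$ and $g\neq 0$ used to divide through, together with the correct normalization $\Delta X=-n\overrightarrow{H}$ with $n=2$ for a surface.
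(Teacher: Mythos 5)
Your proposal is correct and follows essentially the same route as the paper: apply the Beltrami formula $\Delta X=-2\overrightarrow{H}$ to get $\Delta X=-(h_{11}^{1}+h_{22}^{1})e_{3}$, substitute the explicit $e_{3}$ from (\ref{f1}) and the coefficients from (\ref{F3}), and compare components with $AX$. The paper states this in one line; your version merely fills in the componentwise bookkeeping (including the cancellation of $\cos v$, $\sin v$ and the divisions by $f$ and $g$) that the paper leaves implicit.
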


\begin{proof}
Assume that the surface of revolution $M$ given with the parametrization (%
\ref{F1}) is of \textit{coordinate finite type}$.$ Then, from the equality\ (%
\ref{D2*}) 
\begin{equation}
\Delta X=-(h_{11}^{1}+h_{22}^{1})e_{3}.  \label{F7}
\end{equation}%
Further, substituting (\ref{F3}) into (\ref{F7}) and using (\ref{f1}) we get
the result.
\end{proof}

\begin{remark}
If the diagonal matrix $A$ is \ equivalent to a zero matrix then $M$ becomes
minimal. So the surface of revolution $M$ is either an open portion of a
plane or an open portion of a catenoid.
\end{remark}

Minimal rotational surfaces are of coordinate finite type.

For the non-minimal case we obtain the following result;

\begin{proposition}
Let $M$ be a non-minimal surface of revolution given with the
parametrization (\ref{F1}). If $M$ is \textit{coordinate finite type surface
then} 
\begin{equation}
ff^{\text{ }\prime }+\lambda gg\prime =0  \label{F8}
\end{equation}%
holds, where $\lambda $ is a nonzero constant.
\end{proposition}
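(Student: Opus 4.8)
The plan is to extract the stated first-order relation directly from the matrix entries supplied by the preceding Lemma. By that Lemma, saying that $M$ is of coordinate finite type is the same as saying that the two functions $a_{11}$ and $a_{22}=a_{33}$ appearing in (\ref{F6}) are (constant) real numbers, since in $\Delta X=AX$ the matrix $A$ is a fixed diagonal matrix. Writing $W^{2}=(f')^{2}+(g')^{2}$ and collecting the common numerator as
\[
P:=g(g'f''-f'g'')+f'\,((f')^{2}+(g')^{2}),
\]
the entries in (\ref{F6}) read $a_{11}=-g'P/(fgW^{4})$ and $a_{22}=f'P/(g^{2}W^{4})$. Moreover, by (\ref{F3})--(\ref{F4}) the mean curvature vector equals $\tfrac12 H_{0}e_{3}$ with $H_{0}=h_{11}^{1}+h_{22}^{1}=P/(gW^{3})$, so the hypothesis that $M$ is non-minimal is exactly the statement that $P$ does not vanish identically.

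Next I would clear the transcendental factor $P/W^{4}$ that the two entries share. From the forms above, $a_{11}f=-g'P/(gW^{4})$ and $a_{22}g=f'P/(gW^{4})$; multiplying the first identity by $f'$ and the second by $g'$ produces the single quantity $f'g'P/(gW^{4})$ up to sign, so that adding them cancels this factor and leaves
\[
a_{11}\,ff'+a_{22}\,gg'=0 .
\]
This constant-coefficient relation is the core of the argument.

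Finally, since $a_{11}$ and $a_{22}$ are constants, dividing by $a_{11}$ gives $ff'+\lambda gg'=0$ with $\lambda=a_{22}/a_{11}$, and it remains to check $\lambda\neq 0$, i.e. $a_{22}\neq 0$. If $a_{22}=0$, then $f'P\equiv 0$ by (\ref{F6}); on the open set where $P\neq 0$ this forces $f'\equiv 0$, so $X(u,v)$ lies in a plane $x=\text{const}$ and $M$ is minimal, contradicting the hypothesis. Hence $\lambda$ is a nonzero constant. The step I expect to be the main obstacle is justifying the division, that is, ruling out $a_{11}=0$: tracing (\ref{F6}) shows that $a_{11}=0$ with $P\neq 0$ forces $g'\equiv 0$, so that $M$ is a circular cylinder, on which both terms of $a_{11}ff'+a_{22}gg'=0$ vanish separately and the relation degenerates. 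This exceptional profile (with $g$ constant) must therefore be isolated and excluded before the division is carried out.
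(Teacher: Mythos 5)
Your argument is essentially the paper's own proof: the paper likewise sets the two diagonal entries of (\ref{F6}) equal to constants $c_{1},c_{2}$, cancels the common factor $\left(g(g'f''-f'g'')+f'((f')^{2}+(g')^{2})\right)/\left((f')^{2}+(g')^{2}\right)^{2}$ between the two equations to arrive at $c_{1}ff'+c_{2}gg'=0$, and then divides. The only point where you go beyond the paper is the worry about $a_{11}=0$, and you are right to worry: the paper simply declares $c_{1},c_{2}$ nonzero without justification, whereas the circular cylinder ($g$ constant, $f(u)=u$) is a non-minimal coordinate finite type surface of revolution with $a_{11}=0$ for which (\ref{F8}) fails, so this case must genuinely be excluded from the statement, not merely from the proof.
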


\begin{proof}
Suppose that the entries of the diagonal matrix $A$ are real constants. Then
using (\ref{F6}) one can get the following differential equations 
\begin{eqnarray*}
\frac{-g^{\prime }\left( g(g^{\prime }f^{\prime \prime }-f^{\prime
}g^{\prime \prime })+f^{\prime }\left( (f^{\prime })^{2}+(g^{\prime
})^{2}\right) \right) }{fg\left( (f^{\prime })^{2}+(g^{\prime })^{2}\right)
^{2}} &=&c_{1} \\
\frac{f^{\prime }\left( g(g^{\prime }f^{\prime \prime }-f^{\prime }g^{\prime
\prime })+f^{\prime }\left( (f^{\prime })^{2}+(g^{\prime })^{2}\right)
\right) }{g^{2}\left( (f^{\prime })^{2}+(g^{\prime })^{2}\right) ^{2}}
&=&c_{2}.
\end{eqnarray*}%
where $c_{1},c_{2}$ are nonzero real constants. Further, substituting one
into another we obtain the result.
\end{proof}

\begin{example}
The round sphere given with the parametrization $f(u)=r\cos u,$ $g(u)=r\sin
u $ satisfies the equality (\ref{F8}). So it is a coordinate finite type
surface.
\end{example}

\begin{example}
The cone $f(u)=g(u)$ satisfies the equality (\ref{F8}). So it is a
coordinate finite type surface.
\end{example}

\subsection{Generalized Rotation Surfaces of Coordinate Finite Type}

In the present section we consider generalized rotation surfaces of
coordinate finite type surfaces in Euclidean 4-spaces $\mathbb{E}^{4}$.

We proved the following result;

\begin{lemma}
Let $M$ be a generalized rotation surface given with the parametrization (%
\ref{C2}). Then $M$ is a surface of \textit{coordinate finite type if and
only if }diagonal matrix $A$ is of the form%
\begin{equation}
A=\left[ 
\begin{array}{cccc}
a_{11} & 0 & 0 & 0 \\ 
0 & a_{22} & 0 & 0 \\ 
0 & 0 & a_{33} & 0 \\ 
0 & 0 & 0 & a_{44}%
\end{array}%
\right]  \label{D3}
\end{equation}%
\bigskip where%
\begin{equation}
\begin{array}{l}
a_{11}=a_{22}=\frac{-g^{\prime }(u)\left( \left( d^{2}f^{\prime
}g-c^{2}fg^{\prime }\right) \left( (f^{\prime })^{2}+(g^{\prime
})^{2}\right) +\left( g^{\prime }f^{\prime \prime }-f^{\prime }g^{\prime
\prime }\right) \left( c^{2}f^{2}+d^{2}g^{2}\right) \right) }{f(u)\left(
(f^{\prime })^{2}+(g^{\prime })^{2}\right) ^{2}\left(
c^{2}f^{2}+d^{2}g^{2}\right) }, \\ 
a_{33}=a_{44}=\frac{f^{\prime }(u)\left( \left( d^{2}f^{\prime
}g-c^{2}fg^{\prime }\right) \left( (f^{\prime })^{2}+(g^{\prime
})^{2}\right) +\left( g^{\prime }f^{\prime \prime }-f^{\prime }g^{\prime
\prime }\right) \left( c^{2}f^{2}+d^{2}g^{2}\right) \right) }{g(u)\left(
(f^{\prime })^{2}+(g^{\prime })^{2}\right) ^{2}\left(
c^{2}f^{2}+d^{2}g^{2}\right) },%
\end{array}
\label{D4}
\end{equation}%
are differentiable functions and $h_{11}^{1},h_{22}^{1}$ the coefficients of
the second fundamental form given in (\ref{C6}).
\end{lemma}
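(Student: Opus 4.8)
The plan is to mirror the structure of the proof of the earlier surface-of-revolution lemma (the one proved via \eqref{F7}), replacing the single normal vector $e_3$ of the $\mathbb{E}^3$ case with the two-dimensional normal bundle spanned by $e_3, e_4$. Since Proposition~3 establishes that the mean curvature vector of the generalized rotation surface points purely in the $e_3$ direction (that is, $h_{11}^{2}=h_{22}^{2}=0$ as recorded in \eqref{C6}), the Beltrami-type formula \eqref{D2*} collapses to
\begin{equation*}
\Delta X = -(h_{11}^{1}+h_{22}^{1})e_{3}.
\end{equation*}
So the coordinate finite-type condition \eqref{D1*} reduces to requiring that $-(h_{11}^{1}+h_{22}^{1})e_{3} = AX$ for a diagonal matrix $A$. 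The whole proof is then a matter of computing both sides componentwise in $\mathbb{E}^4$ and reading off what $A$ must be.

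First I would write out $e_3$ explicitly from \eqref{C2*} and $X$ explicitly from \eqref{C2}, then form the scalar factor $-(h_{11}^1+h_{22}^1)$ using the expressions in \eqref{C6}. The first two coordinates of $e_3$ are proportional to $(\cos cv, \sin cv)$ with factor $g'/\sqrt{(f')^2+(g')^2}$, while the first two coordinates of $X$ are proportional to $(\cos cv, \sin cv)$ with factor $f$; the last two coordinates of $e_3$ carry $(\cos dv, \sin dv)$ with factor $-f'/\sqrt{(f')^2+(g')^2}$, and those of $X$ carry $(\cos dv, \sin dv)$ with factor $g$. The key observation is that the angular dependence on $v$ matches perfectly between $e_3$ and $X$ within each pair of coordinates, so the equation $\Delta X = AX$ is consistent with a diagonal $A$, and the diagonal entries are forced: $a_{11}=a_{22}$ is obtained by dividing the first-coordinate equation through by $f\cos cv$, and $a_{33}=a_{44}$ by dividing the third-coordinate equation through by $g\cos dv$. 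Substituting the explicit forms of $h_{11}^1$ and $h_{22}^1$ from \eqref{C6} and simplifying the common denominators should reproduce exactly the formulas \eqref{D4}.

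The converse direction is immediate: if $A$ is the diagonal matrix with the entries prescribed in \eqref{D4}, then by construction $AX$ equals $-(h_{11}^1+h_{22}^1)e_3 = \Delta X$, so $M$ is of coordinate finite type. Hence the statement is an equivalence established by a single direct computation in both directions.

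The only genuine subtlety, rather than a hard obstacle, is verifying the algebraic simplification that turns the raw quotient $-(h_{11}^1+h_{22}^1)\,g'/(f\sqrt{(f')^2+(g')^2})$ into the displayed $a_{11}$ in \eqref{D4}; one must carefully combine the two terms of $h_{11}^1+h_{22}^1$ over the common denominator $\big((f')^2+(g')^2\big)^{3/2}(c^2f^2+d^2g^2)$ so that the square-root factor cancels cleanly and the numerator organizes into the two bracketed pieces $\big(d^2 f'g - c^2 f g'\big)\big((f')^2+(g')^2\big)$ and $\big(g'f'' - f'g''\big)\big(c^2f^2+d^2g^2\big)$. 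This is routine bookkeeping but it is where an error would most likely hide. I would also note at the outset that regularity guarantees $f,g,(f')^2+(g')^2$ and $c^2f^2+d^2g^2$ are nonzero where needed, so the divisions are legitimate.
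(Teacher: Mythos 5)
Your proposal is correct and matches the paper's intended argument exactly: the paper proves the analogous surface-of-revolution lemma by reducing \eqref{D2*} to $\Delta X=-(h_{11}^{1}+h_{22}^{1})e_{3}$ and comparing components with $AX$, and for the present lemma it silently applies the same computation using \eqref{C2*} and \eqref{C6}, which is precisely what you carry out. Your componentwise matching of the $(\cos cv,\sin cv)$ and $(\cos dv,\sin dv)$ blocks reproduces \eqref{D4} correctly, so there is nothing further to add.
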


If the matrix $A$ is \ a zero matrix then $M$ becomes minimal. So minimal
rotational surfaces are of coordinate finite type.

We prove the following result.

\begin{proposition}
Let $M$ be a generalized rotation surface given by the parametrization (\ref%
{C2}). If $M$ is a \textit{coordinate finite type th}en 
\begin{equation*}
ff^{\text{ }\prime }=cgg\prime
\end{equation*}%
holds, where, $c$ is a real constant.
\end{proposition}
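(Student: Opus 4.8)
The plan is to mirror the argument used for the surface of revolution case (Proposition~20), since the statement for generalized rotation surfaces is the exact analogue of equation~(\ref{F8}). The starting point is Lemma~21, which tells us that $M$ is of coordinate finite type precisely when the diagonal entries $a_{11}=a_{22}$ and $a_{33}=a_{44}$ from~(\ref{D4}) are constants. I would set
\begin{equation*}
P(u)=\left( d^{2}f^{\prime }g-c^{2}fg^{\prime }\right) \left( (f^{\prime })^{2}+(g^{\prime })^{2}\right) +\left( g^{\prime }f^{\prime \prime }-f^{\prime }g^{\prime \prime }\right) \left( c^{2}f^{2}+d^{2}g^{2}\right),
\end{equation*}
which is the common numerator factor appearing in both $a_{11}$ and $a_{33}$, and write $Q(u)=\left( (f^{\prime })^{2}+(g^{\prime })^{2}\right) ^{2}\left( c^{2}f^{2}+d^{2}g^{2}\right)$ for the shared denominator factor.

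With this abbreviation the two constancy conditions become $-g^{\prime }P/(fQ)=c_{1}$ and $f^{\prime }P/(gQ)=c_{2}$ for real constants $c_{1},c_{2}$. The key algebraic step is to eliminate the bulky common quantity $P/Q$ between these two equations. Dividing (or cross-multiplying) the first by the second kills both $P$ and $Q$ simultaneously and leaves a relation purely among $f,g,f^{\prime },g^{\prime }$: from $-g^{\prime }P/(fQ)=c_{1}$ and $f^{\prime }P/(gQ)=c_{2}$ one gets $\dfrac{-g^{\prime }/f}{f^{\prime }/g}=\dfrac{c_{1}}{c_{2}}$, i.e.
\begin{equation*}
-\frac{g g^{\prime }}{f f^{\prime }}=\frac{c_{1}}{c_{2}},
\end{equation*}
which rearranges directly into $ff^{\prime }=c\,gg^{\prime }$ with $c=-c_{2}/c_{1}$ a real constant. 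This is exactly the claimed relation, matching the form of~(\ref{F8}) from the surface of revolution case.

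The main obstacle, as in the earlier proposition, is the handling of degenerate cases where the elimination is not valid: one must assume the surface is non-minimal (so that $P$ is not identically zero, since $P=0$ is precisely the minimality condition of Corollary~4) and that the denominators $f,g,f^{\prime }$ do not vanish on the domain, so that the division producing the final ratio is legitimate and the constants $c_{1},c_{2}$ are genuinely nonzero. The statement as phrased is a one-directional implication (coordinate finite type $\Rightarrow$ the relation), so I would not need to establish a converse; it suffices to carry out the elimination cleanly under these genericity assumptions. I expect the only real care to be bookkeeping the nonvanishing hypotheses and confirming that the constant $c$ absorbs the sign correctly, after which the result follows immediately from Lemma~21.
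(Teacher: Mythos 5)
Your proposal is correct and follows essentially the same route as the paper: the paper's own (very terse) proof likewise sets the two diagonal entries from (\ref{D4}) equal to constants and eliminates the common factor between the two equations to obtain $ff^{\prime}=c\,gg^{\prime}$. Your version is simply a more explicit and more careful rendering of that same elimination, with the nonvanishing caveats spelled out.
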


\begin{proof}
Suppose that the entries of the diagonal matrix $A$ are real constants.
Then, substituting the first equation in (\ref{D4}) into second one we get
the result.
\end{proof}

An easy consequence of Proposition $24$ is the following.

\begin{corollary}
Let $M$ be a Vranceanu rotation surface in Euclidean 4-space. If $M$ is a 
\textit{coordinate finite type, th}en 
\begin{equation*}
rr^{\text{ }\prime }\left( \cos ^{2}u-c\sin ^{2}u\right) =r^{2}\cos u\sin
u(1+c)
\end{equation*}%
holds, where, $c$ is a real constant.
\end{corollary}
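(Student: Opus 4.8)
The plan is to specialize Proposition 24 to the Vranceanu parametrization given in Definition 8. By that proposition, a generalized rotation surface of coordinate finite type must satisfy $ff^{\prime}=c\,gg^{\prime}$ for some real constant $c$. Since the Vranceanu surface is obtained from the general rotation surface by setting $f(u)=r(u)\cos u$, $g(u)=r(u)\sin u$ and $c=d=1$, the entire argument reduces to substituting these expressions into the relation $ff^{\prime}=c\,gg^{\prime}$ and simplifying. Note the unfortunate notational clash: the constant appearing in Proposition 24 (here called $c$) is distinct from the parameter $c$ in the original parametrization (\ref{C2}), which has been fixed to $1$ in the Vranceanu case; in the corollary statement $c$ denotes the former.

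First I would compute $f^{\prime}$ and $g^{\prime}$ explicitly. Differentiating gives
\begin{eqnarray*}
f^{\prime}(u) &=& r^{\prime}\cos u - r\sin u, \\
g^{\prime}(u) &=& r^{\prime}\sin u + r\cos u.
\end{eqnarray*}
Then I would form the two products. A direct expansion yields
\begin{eqnarray*}
ff^{\prime} &=& r\cos u\,(r^{\prime}\cos u - r\sin u) = rr^{\prime}\cos^{2}u - r^{2}\cos u\sin u, \\
gg^{\prime} &=& r\sin u\,(r^{\prime}\sin u + r\cos u) = rr^{\prime}\sin^{2}u + r^{2}\cos u\sin u.
\end{eqnarray*}
Substituting these into $ff^{\prime}=c\,gg^{\prime}$ produces
\[
rr^{\prime}\cos^{2}u - r^{2}\cos u\sin u = c\left(rr^{\prime}\sin^{2}u + r^{2}\cos u\sin u\right).
\]

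Finally I would rearrange this equation so that the $rr^{\prime}$ terms are grouped together and the $r^{2}\cos u\sin u$ terms are collected on the other side, giving
\[
rr^{\prime}\left(\cos^{2}u - c\sin^{2}u\right) = r^{2}\cos u\sin u\,(1+c),
\]
which is exactly the claimed identity. There is essentially no obstacle here: the proof is a routine substitution and regrouping, and the only thing one must be careful about is the overloading of the symbol $c$, keeping the fixed parameter value $c=1$ from Definition 8 distinct from the arbitrary constant $c$ inherited from Proposition 24. Everything else is elementary algebra, so the corollary follows immediately from Proposition 24.
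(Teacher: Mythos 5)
Your proof is correct and is exactly the argument the paper intends: the corollary is stated as "an easy consequence of Proposition 24," obtained by substituting $f=r\cos u$, $g=r\sin u$ into $ff^{\prime}=c\,gg^{\prime}$ and regrouping, which is precisely what you did. Your explicit warning about the clash between the constant $c$ of Proposition 24 and the parameter $c=1$ of the Vranceanu parametrization is a helpful clarification the paper omits.
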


We obtain the following result;

\begin{theorem}
Let $M$ be a Vranceanu rotation surface in Euclidean 4-space. Then $M$ is of
restricted type if and only if $M$ is one of the following:

$i)$ an open part of a Clifford torus$,$

$ii)$ a minimal surface given with the parametrization (\ref{C20}).

$iii)$ a surface given with the parametrization 
\begin{equation}
r(u)=\frac{\pm \lambda }{\sqrt{(1+c)\cos 2u+(1-c)}},c\neq 1
\end{equation}%
where, $\lambda $ and $c$ are real constants.
\end{theorem}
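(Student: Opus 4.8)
The plan is to read the governing condition off Corollary 25 and integrate it, splitting the analysis according to whether $M$ is minimal. Throughout I read \emph{restricted type} as the coordinate finite type condition $\Delta X=AX$ (with $A$ a constant diagonal matrix) developed above. By Lemma 23, a Vranceanu surface (so $c=d=1$ in (\ref{C2})) is of coordinate finite type precisely when the entries $a_{11}=a_{22}$ and $a_{33}=a_{44}$ of (\ref{D4}) are real constants. Both entries share a common numerator factor, and comparison with the minimality condition of Corollary 4 shows that this factor vanishes identically exactly when $M$ is minimal. I would therefore treat the minimal and non-minimal cases separately.

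In the minimal case the matrix $A$ in (\ref{D3}) is the zero matrix, so $\Delta X=0$ and $M$ is trivially of coordinate finite type; Corollary 9 then identifies $M$ as a minimal surface of the form (\ref{C20}), which is alternative (ii).

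In the non-minimal case the common factor is nonzero, so constancy of $a_{11}$ and $a_{33}$ forces, through Proposition 24 and its Vranceanu specialization Corollary 25, the relation $ff^{\prime}=c\,gg^{\prime}$ for some real constant $c$. I would integrate this at once via the first integral $f^{2}-cg^{2}=\mathrm{const}$; substituting $f=r\cos u$, $g=r\sin u$ and using $\cos^{2}u-c\sin^{2}u=\frac{1}{2}\bigl((1-c)+(1+c)\cos 2u\bigr)$ gives
\begin{equation*}
r^{2}\bigl((1-c)+(1+c)\cos 2u\bigr)=\mathrm{const},
\end{equation*}
that is, $r(u)=\pm\lambda/\sqrt{(1+c)\cos 2u+(1-c)}$. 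Reading off the distinguished values of $c$ then finishes the classification: $c=-1$ makes $r$ constant, so $M$ is a product of two equal circles, i.e.\ a Clifford torus (alternative (i), consistent with Corollary 7); $c=1$ annihilates the constant term and returns a $\sin 2u$-free member of the minimal family (\ref{C20}), which is exactly why alternative (iii) excludes $c=1$; every remaining value of $c$ yields alternative (iii).

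For the converse I would substitute each candidate back into (\ref{D4}). Minimality disposes of (ii) for free. The delicate point, and the step I expect to be the main obstacle, is the sufficiency for (i) and (iii): Proposition 24 and Corollary 25 only guarantee that the quotient $a_{11}/a_{33}$ is constant, whereas coordinate finite type requires each of $a_{11}$ and $a_{33}$ to be constant on its own. Verifying that the explicit $r(u)=\pm\lambda/\sqrt{(1+c)\cos 2u+(1-c)}$ indeed makes both diagonal entries constant, and not merely their ratio, is the genuine content of the \emph{if} direction and requires carrying the back-substitution through in full.
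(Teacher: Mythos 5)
The paper states this theorem with no proof at all, so there is nothing of the authors' to measure your argument against; what follows assesses your proposal on its own terms. Your necessity direction is sound and is surely the intended route: integrating $ff'=c\,gg'$ from Proposition 24 to $f^{2}-cg^{2}=\mathrm{const}$ and substituting $f=r\cos u$, $g=r\sin u$ does give $r^{2}\bigl((1-c)+(1+c)\cos 2u\bigr)=\mathrm{const}$, and your identification of $c=-1$ with the Clifford torus and of $c=1$ with a member of the minimal family (\ref{C20}) is correct.

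The step you defer at the end, however, is not a routine back-substitution but the point where the argument breaks. Take $c=0$ in family (iii): then $r=\pm\lambda/\sqrt{\cos 2u+1}$, so $f=r\cos u\equiv\lambda/\sqrt{2}$ is constant and $g=f\tan u$. Feeding this into (\ref{D4}) (with the parametrization constants of (\ref{C2}) equal to $1$) gives $a_{33}=a_{44}=0$ but $a_{11}=a_{22}=1/(f^{2}+g^{2})=1/r^{2}$, which is not constant; this member of family (iii) is therefore not of coordinate finite type. More generally, setting $w=r^{-2}$ one finds $a_{33}$ proportional to $c(1-c)\,w^{3}/\bigl((1+c^{2})+(1-c^{2})\cos 2u\bigr)^{2}$, and a genuine cubic in $\cos 2u$ cannot be a constant multiple of a quadratic, so constancy of both diagonal entries forces $c=\pm 1$, i.e.\ exactly the overlaps with cases (i) and (ii). Proposition 24 records only that the ratio $a_{11}/a_{33}$ is constant, and that necessary condition is genuinely not sufficient. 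Consequently your proof cannot be completed as written: either ``restricted type'' must be read in the weaker sense of \cite{CDVV} (a notion this paper never defines, and one for which Corollary 25 is not the relevant tool), or the ``if'' direction of the statement is simply false for family (iii). You were right to single out this step as the main obstacle; the correct conclusion is that it does not go through.
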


In \cite{Ho} C. S. Houh investigated Vranceanu rotation surfaces of finite
type and proved the following

\begin{theorem}
\cite{Ho} A flat Vranceanu rotation surface in $\mathbb{E}^{4}$ is of finite
type if and only if it is the product of two circles with the same radius,
i.e. it is a Clifford torus.
\end{theorem}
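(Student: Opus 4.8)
The plan is to reduce the statement to a spectral computation for the intrinsic Laplacian, exploiting the very special metric that flatness forces on a Vranceanu surface. First I would invoke the flatness classification: since $M$ is flat, Corollary~7 (from \cite{Yo}) forces the profile function to be $r(u)=\lambda e^{\mu u}$ with $\lambda\neq 0$, and the Clifford torus is exactly the degenerate case $\mu=0$ (equivalently $r$ constant). So the entire theorem reduces to proving that, among the surfaces with $r=\lambda e^{\mu u}$, finite type holds \emph{iff} $\mu=0$.

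Next I would compute the induced metric for $f=r\cos u$, $g=r\sin u$, $c=d=1$. Using $r'=\mu r$ one gets $E=(f')^{2}+(g')^{2}=(1+\mu^{2})r^{2}$, $F=0$, $G=f^{2}+g^{2}=r^{2}$, so that the ratios $\sqrt{G/E}$ and $\sqrt{E/G}$ are \emph{constant}. Writing $\alpha=1+\mu^{2}$, the Laplacian (\ref{A9}) then collapses to $\Delta\phi=-\tfrac{1}{\alpha r^{2}}\phi_{uu}-\tfrac{1}{r^{2}}\phi_{vv}$. The decisive observation is how $\Delta$ acts on the separated building blocks $e^{\beta u}\cos v$ and $e^{\beta u}\sin v$: since $r^{-2}=\lambda^{-2}e^{-2\mu u}$,
\begin{equation*}
\Delta\left(e^{\beta u}\cos v\right)=\frac{1}{\lambda^{2}}\left(1-\frac{\beta^{2}}{\alpha}\right)e^{(\beta-2\mu)u}\cos v,
\end{equation*}
and likewise with $\sin v$. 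Thus $\Delta$ is a \emph{shift operator}: it lowers the exponent $\beta$ by $2\mu$ and multiplies by the scalar $\lambda^{-2}(1-\beta^{2}/\alpha)$.

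The four coordinate functions of (\ref{C2}) are real combinations of $e^{(\mu\pm i)u}\cos v$ and $e^{(\mu\pm i)u}\sin v$, i.e. each chain starts at $\beta_{0}=\mu\pm i$. I would then characterize finite type by the standard criterion that $M$ is of finite type iff $X,\Delta X,\Delta^{2}X,\dots$ span a finite-dimensional space. Applying $\Delta$ repeatedly produces exponents $\beta_{k}=(1-2k)\mu\pm i$. If $\mu=0$ the exponent is preserved and each coordinate is already an eigenfunction, $\Delta(e^{\pm iu}\cos v)=\tfrac{2}{\lambda^{2}}e^{\pm iu}\cos v$, so $M$ is of $1$-type; by Corollary~7 this is precisely the Clifford torus. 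If $\mu\neq 0$ the real parts $(1-2k)\mu$ are pairwise distinct, so the iterates are linearly independent \emph{unless} some factor $1-\beta_{k}^{2}/\alpha$ vanishes and truncates the chain; but $\beta_{k}^{2}=((1-2k)\mu)^{2}-1+2(1-2k)\mu\,i$ is never the real number $\alpha$, since its imaginary part $2(1-2k)\mu$ is nonzero. Hence each chain is infinite and $M$ is not of finite type.

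The most delicate point is exactly this last one: for $\mu\neq 0$ I must check that no accumulated coefficient vanishes and that the resulting exponentials remain genuinely independent across all four coordinate chains, so that finite type really fails rather than collapsing by some accidental cancellation. Granting this, finite type is equivalent to $\mu=0$, i.e. to $r$ constant, which is the product of two circles of equal radius (the Clifford torus), and the theorem follows.
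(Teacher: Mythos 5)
The paper does not actually prove this statement: it is quoted verbatim from Houh \cite{Ho} and appears with no argument at all, so there is nothing internal to compare your proof against. On its own merits your argument is correct and complete, and it is the natural spectral proof. The reduction via Corollary 7 to $r(u)=\lambda e^{\mu u}$ is exactly what flatness gives; your metric computation $E=(1+\mu^{2})r^{2}$, $F=0$, $G=r^{2}$ is right, and the key structural point --- that the conformal factor is itself an exponential, so $\Delta$ acts on the basis $e^{\beta u}\cos v$, $e^{\beta u}\sin v$ as a shift $\beta\mapsto\beta-2\mu$ with multiplier $\lambda^{-2}(1-\beta^{2}/\alpha)$ --- is the whole proof. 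Your check that the multiplier never vanishes (the imaginary part $2(1-2k)\mu$ of $\beta_{k}^{2}$ is nonzero when $\mu\neq0$) is the right obstruction to truncation. The one sentence you should still write out explicitly is the passage from ``nonzero complex coefficients with pairwise distinct real exponents'' to ``the real-valued iterates $\Delta^{k}x_{1}$ are linearly independent'': each iterate has the form $e^{(1-2k)\mu u}\,p_{k}(u)\cos v$ with $p_{k}$ a nonvanishing trigonometric polynomial, and functions with distinct exponential growth rates times bounded nonzero periodic factors are independent on any interval (real-analyticity handles the restriction to $J$). With that, finite type forces $\mu=0$, and the $\mu=0$ surface is $\Delta X=\tfrac{2}{\lambda^{2}}X$, a $1$-type surface which a change of parameters $s=u+v$, $t=u-v$ exhibits as the product of two circles of radius $\lambda/\sqrt{2}$, i.e.\ a Clifford torus. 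So your proof fills a gap the paper leaves open rather than duplicating anything in it.
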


\bigskip

Beng\"{u} (K\i l\i \c{c}) Bayram

Department of Mathematics

Bal\i kesir University

Bal\i kesir, Turkey

e-mail: benguk@balikesir.edu.tr

\bigskip

Kadri Arslan \& Bet\"{u}l Bulca

Department of Mathematics

Uluda\u{g} University

16059, Bursa, Turkey

e-mail: arslan@uludag.edu.tr; bbulca@uludag.edu.tr

\bigskip

Nergiz \"{O}nen

Department of Mathematics

\c{C}ukurova University

Adana, Turkey

e-mail: nonen@cu.edu.tr

\end{document}